\newtheorem {Lemma}{Lemma}[section]
\newtheorem {Theorem}{Theorem}[section]
\newtheorem {Conjecture}{Conjecture}[section]
\numberwithin{equation}{section}
\begin{document}

\title{A proof of a conjecture on the distance spectral radius}
\author[1]{Yanna Wang\footnote{E-mail: wangyn@gdcp.edu.cn}}

\author[2]{Bo Zhou\footnote{Coresponding author. E-mail: zhoubo@scnu.edu.cn}}

\affil[1]{Basic Courses Department, Guangdong Communication Polytechnic,
Guangzhou 510650, P. R. China}

\affil[2]{School of Mathematical Sciences, South China Normal University,
Guangzhou 510631, P. R. China}

\date{}

\maketitle

\begin{abstract}
A cactus is a connected graph in which any two cycles have at most one common vertex.
We determine the unique graph that maximizes the distance spectral radius over  all cacti with fixed numbers of vertices and cycles, and thus prove a conjecture on the distance spectral radius of cacti
in [S.S. Bose, M. Nath, S. Paul,
On the distance spectral radius of cacti,
Linear Algebra Appl. 437 (2012) 2128--2141]. We prove the result in the context of hypertrees.
\\ \\
{\it Keywords:} distance spectral radius,  cactus, hypertree, distance matrix\\ \\
{\it AMS Mathematics Subject Classifications:}  05C50, 05C65
\end{abstract}

\section{Introduction}

A (simple) hypergraph $G$ consists of a vertex set $V(G)$  and an edge set $E(G)$,  where every edge in $E(G)$ is a subset of $V(G)$ containing at least two vertices,
 see \cite{Ber}.  The rank of $G$ is the maximum cardinality of edges of $G$. For an integer $k\geq 2$, we say that  $G$ is $k$-uniform if every edge of $G$ contains exactly $k$ vertices. An ordinary (simple) graph is just a $2$-uniform hypergraph. For $u,v\in V(G)$, if they are contained in some edge of $G$, then we say that they are adjacent, or $v$ is a neighbor of $u$. For $u\in V(G)$, let $N_G(u)$ be the set of neighbors of $u$ in $G$ and $E_G(u)$ be the set of edges containing $u$ in $G$. The degree of a vertex $u$ in $G$, denoted by $\mbox{deg}_G(u)$, is $|E_G(u)|$.

For distinct vertices $v_0,\dots, v_{p}$ and distinct edges $e_1, \dots, e_p$ of $G$,
the alternating sequence 
$(v_0,e_1,v_1,\dots,v_{p-1},e_p,v_p)$ such that $v_{i-1}, v_i\in e_i$ for $i=1,\dots,p$ and $e_i\cap e_j=\emptyset$ for $i,j=1,\dots,p$ with $j>i+1$
is a  loose path of $G$ from $v_0$ to $v_p$ of length $p$.
If there is a  loose path from $u$ to $v$ for any $u,v\in V(G)$, then we say that $G$ is connected.

For distinct vertices $v_0,\dots, v_{p-1}$ and distinct edges $e_1, \dots, e_p$,
the alternating sequence 
$(v_0,e_1,v_1,\dots,v_{p-1},e_p,v_0)$ such that $v_{i-1},v_i\in e_i$ for $i=1, \dots, p$ with $v_p=v_0$ and $e_i\cap e_j=\emptyset$ for $i,j=1,\dots,p$ with $\mid i-j \mid>1$ and $\{i,j\}\neq \{1,p\}$
is a  loose cycle of $G$ of length $p$. A hypertree is a connected hypergraph with no  loose cycles.

An edge $e$ of a hypergraph $G$ is called a pendant edge of $G$ at $v$ if $v\in e$, the degree of all vertices of $e$ except $v$ in $G$ is one, and $\mbox{deg}_G(v)>1$. A pendant vertex is a vertex of degree one.

Any  hypergraph $G$ corresponds naturally to a graph  $O_G$  with $V(O_G)=V(G)$
such that for $u,v\in V(O_G)$,  $\{u, v\}$ is an edge of $O_G$ if and only if $u$ and $v$ are in some edge of $G$. Obviously, an edge of $G$ with size $r$ corresponds naturally to a clique (maximal $2$-connected subgraph)  of $O_G$ with size $r$.
If $G$ is connected, then the distance between vertices  $u$ and $v$ in $G$ (or $O_G$), denoted by  $d_{G}(u,v)$,  is the length of a shortest  loose path connecting them in $G$. For some extremal spectral problems related to distance, we find hypergraph notation is  more convenient and effective.

Let $G$ be a connected hypergraph  on $n$ vertices. The distance matrix of $G$ is defined as $D(G)=(d_{G}(u,v))_{u, v\in V(G)}$.
The distance
spectral radius of $G$, denoted by $\rho(G)$, is the largest
eigenvalue of $D(G)$.
The  eigenvalues of distance matrices of graphs, arisen from a data communication problem studied by Graham and
Pollack \cite{GP} in 1971, have been studied extensively,
and particularly, the distance spectral radius received much attention, see the
 survey \cite{AH-2}. We mentioned that the distance spectral radius has also been used as a molecular descriptor, see \cite{BCM,GM}.
Watanabe et al.~\cite{WIS} studied spectral properties of the distance matrix of uniform hypertrees, generalizing some results by Graham and Pollak \cite{GP} and Sivasubramanian~\cite{Si}.
Lin  and Zhou~\cite{LZ1}
studied the distance spectral radius of  uniform hypergraphs and particularly, uniform hypertrees.
Wang  and Zhou~\cite{WZ2} studied the distance spectral radius of a hypergraph that is not necessarily uniform, and
 determined the unique  hypertrees with  minimum and maximum  distance  spectral radius, respectively, among hypertrees on $n$ vertices with $m$ edges, where $1\leq m\leq n-1$, and also determined the unique  hypertrees with the first three smallest (largest, respectively)  distance  spectral radii among hypertrees on $n\ge 6$ vertices. In \cite{WZ3}, they made further efforts to identify extremal hypergraphs in some classes of hypergraphs with given parameters.


A cactus is a connected graph in which any two cycles have at most
one common vertex. Denote by $P_n$ the (ordinary) path of order $n$.
A saw-graph of order $n$ with length $k$ is a cactus of order $n$ obtained from $P_{n-k}$ by replacing $k$ of its edges with $k$ triangles, where $0\leq k\leq \lfloor\frac{n-1}{2}\rfloor$. In particular, $P_n$ is a saw-graph of length $0$.
A saw-graph  with length $k$ is a proper saw-graph if its order is $2k+1$. An end of a saw-graph is a vertex of degree $2$ that is adjacent to a vertex of degree $2$. The saw-graph obtained by joining an end of a proper saw graph of length $p$ with an end of another proper saw-graph of length $q$ by a path of length $\ell$ is denoted by  $S(p,q;\ell)$. Particularly, $S(p,q;0)$ is just the proper saw-graph of length $p+q$.


Let $\mathcal{C}(n,k)$ be the class of all cacti on $n$ vertices and $k$ cycles, where $0\le k\le \lfloor\frac{n-1}{2}\rfloor$. Let  $G$ be a graph with maximum distance spectral radius in $\mathcal{C}(n, k)$.  Then, by \cite[Lemma 5.2]{SMS}, all cycles of $G$ are triangles. If $G$ is not a saw-graph, then there does not necessarily exist a cut vertex $v$ such that $G-v$ has three components, for example, a graph obtained from a saw-graph on $n-2$ vertices by adding a triangle at a vertex of degree two that is not an end on some triangle. In this case,
\cite[Lemma 5.1]{SMS} does not apply. So, to show $G$ is a saw-graph, some different technique is needed.  Note that $S(0,0, n-1)$ and $S(0, 1, n-3)$ are the unique graphs with maximum distance spectral radius in
$C(n, 0)$ and $C(n, 1)$, respectively \cite{RP,SI,Yu}.  Based on further computer results,
Bose et al. \cite{SMS}  posed the following conjecture.

\begin{Conjecture} \cite[Conjecture 5.4]{SMS} \label{00}
$S(\lfloor\frac{k}{2}\rfloor, \lceil\frac{k}{2}\rceil;n-2k-1)$ uniquely maximizes the distance spectral radius in $\mathcal{C}(n,k)$.
\end{Conjecture}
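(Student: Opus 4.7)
By \cite[Lemma 5.2]{SMS}, any graph $G\in\mathcal{C}(n,k)$ that maximizes $\rho$ has all its cycles of length $3$. Replacing each triangle of such a $G$ with one hyperedge of size $3$ yields a hypertree $H$ on $n$ vertices with exactly $k$ edges of size $3$ and $n-1-2k$ edges of size $2$; since $O_{H}=G$, distances are preserved, and conversely every such hypertree comes from a unique cactus in $\mathcal{C}(n,k)$. The conjecture is therefore equivalent to showing that, within the class $\mathcal{H}(n,k)$ of hypertrees on $n$ vertices with $k$ edges of size $3$ and $n-1-2k$ edges of size $2$, the unique maximizer of $\rho$ is the hypertree $H^{*}$ with $O_{H^{*}}=S(\lfloor k/2\rfloor,\lceil k/2\rceil;n-2k-1)$. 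I plan to prove this equivalent hypertree statement, which also places the result in the natural setting of the earlier work \cite{WZ2,WZ3}.

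\textbf{Transformations and structural reduction.} Fix a maximizer $H\in\mathcal{H}(n,k)$ and its positive Perron eigenvector $x$ of $D(H)$. The engine of the proof is the Rayleigh inequality: if $H'\in\mathcal{H}(n,k)$ satisfies $x^{T}(D(H')-D(H))x>0$, then $\rho(H')>\rho(H)$, contradicting maximality. With this I intend to prove, in turn: (i) a monotonicity statement, saying that along any longest loose path in $H$ the Perron components $x_{v}$ increase from the middle toward each end; (ii) a \emph{grafting lemma}: if $H$ has a pendant subhypertree attached at an interior vertex of a longest loose path, relocating it to one of the end-vertices strictly increases $\rho$; (iii) a \emph{recoupling lemma}: along the spine, a pendant size-$3$ edge and a pendant size-$2$ edge can be swapped to strictly increase $\rho$ whenever the size-$3$ edge is the one farther from the ends. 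Iterating (i)--(iii) forces $H$ to be a ``loose caterpillar'' whose spine is a single loose path with all pendant size-$3$ edges loaded consecutively at the two ends, i.e., $O_{H}=S(p,q;n-2k-1)$ for some $p+q=k$.

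\textbf{Balancing the two ends and main obstacle.} To conclude I would show that among all $S(p,q;n-2k-1)$ with $p+q=k$, the maximum of $\rho$ is uniquely attained at $|p-q|\le 1$. The plan is a direct comparison of $S(p,q;\ell)$ with $S(p-1,q+1;\ell)$ for $p\ge q+2$: using the Perron vector of the first saw-graph and the (near-)symmetry of its structure, the difference $x^{T}(D(S(p-1,q+1;\ell))-D(S(p,q;\ell)))x$ breaks into pairs of contributions that all share one sign, yielding strict increase of $\rho$, and iteration drives $(p,q)$ to $(\lfloor k/2\rfloor,\lceil k/2\rceil)$. The main obstacle will be the grafting lemma~(ii): in a hypertree with edges of two different sizes, moving a subhypertree alters many entries of the distance matrix simultaneously, and one must control the sign of $x^{T}\Delta D\, x$ via a decomposition into elementary one-edge ``slides'' along the spine, each handled using only the monotonicity (i) and an averaging over the moved subhypertree. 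Once (ii) is in place, the recoupling lemma (iii) and the final balancing step follow the same template.
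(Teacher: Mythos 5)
Your overall strategy---pass to hypertrees of rank at most three, reduce a maximizer to a saw-shaped loose path by edge/vertex moving arguments, then balance the two ends---is exactly the architecture of the paper. The structural reduction you outline is workable, though you make it harder than necessary: the paper does not need any pointwise monotonicity of the Perron vector along a longest path of the (unknown) maximizer. It gets maximum degree two, a pendant vertex in every size-$3$ edge, and consecutiveness of the size-$3$ edges at the two ends from three short claims, each driven by a component-sum comparison of the form $\sigma_T(T_1)\ge\sigma_T(T_2)$ fed into the edge-moving and vertex-moving lemmas quoted from \cite{WZ2}, plus one elementary inequality ($x_w+x_u-x_v>0$ for non-adjacent neighbors $v,w$ of $u$). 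Your proposed monotonicity statement (i) for an arbitrary maximizer is both unproved and unnecessary at that stage.

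The genuine gap is in your final paragraph. You identify the grafting lemma as the main obstacle and assert that the balancing of $S(p,q;\ell)$ against $S(p-1,q+1;\ell)$ ``follows the same template,'' with the difference $x^{\top}\Delta D\,x$ breaking into ``pairs of contributions that all share one sign.'' That is false, and this step is in fact the heart of the paper (Lemmas 3.1--3.4 and 4.5, roughly two thirds of its length). When the pendant vertex $w_{\ell-b}$ is moved from the long end to the short end, the change in $x^{\top}D x$ is $2x_{w_{\ell-b}}W$ with $W=(r-1)(\sigma_T(A)-\sigma_T(B))+C$, where $C=\sum_i(r-2i)(x_{v_{\ell-b+1-i}}-x_{v_{a+1+i}})$ is \emph{negative} (the Perron entries are larger toward the longer end), so the favorable term $(r-1)(\sigma_T(A)-\sigma_T(B))$ must quantitatively dominate an unfavorable one; no sign bookkeeping alone settles this. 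The paper handles it by (a) a battery of entry-comparison lemmas for the specific family $T(n,a,b)$ establishing which of the paired entries is larger and by how much the differences are ordered, (b) a lower bound $\rho(T)>(2a+1)(r-1)+\tfrac{r}{r-1}\sum_{i}(r-2i)$ obtained from the minimum row sum of a suitable induced subhypertree, and (c) a two-stage contradiction argument: assuming $W\le 0$ one bounds $\rho(T)(x_{v_{a+1}}-x_{v_{\ell-b+1}})$ above by $\frac{2r-1}{r-1}W$ plus a multiple of $x_{v_{a+1}}-x_{v_{\ell-b+1}}$ with coefficient strictly below $\rho(T)$, forcing $W>0$. None of this is foreshadowed in your plan, so as written the proposal does not yet contain a proof of the conjecture; you would need to supply the quantitative spectral lower bound and the ordered entry comparisons before the ``direct comparison'' can close.
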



If $T$ is a hypertree, then $O_T$ is a connected graph in which every  clique corresponds to an edge of $T$.

Let $T(n,a,b)$ be the hypertree obtained by inserting a pendant vertex
$w_i$ in  $e_i=\{v_i,v_{i+1}\}$ of the path $P_{n-a-b}=v_{1}v_{2}\cdots v_{n-a-b}$
 for $i=1, \dots, a, n-a-2b, \dots, n-a-b-1$, where $0\leq a\leq b$ and $a+b\le  \lfloor \frac{n-1}{2} \rfloor$. It is evident that $O_{T(n,a,b)}\cong S(a,b;n-2(a+b)-1)$.

In this paper, we prove the following result.

\begin{Theorem} \label{t0}
Let $T$ be a  hypertree of order $n$  with rank at most three  and  $k$ edges of size three, where $1\leq k\leq \lfloor \frac{n-1}{2} \rfloor$.
Then $\rho(T)\leq \rho (T(n,\lfloor\frac{k}{2}\rfloor, \lceil\frac{k}{2}\rceil))$ with equality if and only if $T\cong T(n,\lfloor\frac{k}{2}\rfloor, \lceil\frac{k}{2}\rceil)$.
\end{Theorem}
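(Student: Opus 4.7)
The plan is to pick an extremal hypertree $T$ of order $n$, rank at most three, with $k$ edges of size three, and to transform it step by step into $T(n,\lfloor k/2\rfloor, \lceil k/2\rceil)$ by strictly $\rho$-increasing local modifications. Throughout, $x$ will denote the positive Perron vector of $D(T)$, normalised so that $\rho(T) = x^{\top} D(T) x$; for any modification $T \to T'$ staying inside the class, it suffices to verify $x^{\top} D(T')x > x^{\top} D(T)x$ and apply Rayleigh's principle. The argument splits into three phases: (i) straighten the underlying spine into a single loose path; (ii) push all $3$-edges to the two ends of this spine; (iii) balance the numbers of $3$-edges between the two ends.

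For phase (i), I would prove a branch-shifting lemma. Suppose $v$ is a vertex of $T$ that carries two disjoint sub-hypertrees $B_1$ and $B_2$ hanging off it, with $\sum_{u\in B_1} x_u \le \sum_{u\in B_2} x_u$. Relocating $B_1$ so that it hangs from a farthest vertex of $B_2$ produces a hypertree $T'$ in the same class with $x^{\top} D(T')x > x^{\top} D(T)x$, because the distances from vertices of $B_1$ to the vertices outside $B_1 \cup B_2$ strictly increase, while distances internal to $B_1$ or $B_2$ are unchanged. Iterating eliminates every vertex of degree $\ge 3$ from the spine, so the non-pendant vertices of the extremal $T$ lie on one loose path, and the hypertree is determined by the sequence of edge sizes along that path.

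For phase (ii), I would show that an interior $3$-edge can always be swapped outward. If a $2$-edge $e_{j-1}$ sits closer to an end than a $3$-edge $e_j$ along the spine, exchanging their sizes (moving the extra pendant from $e_j$ into $e_{j-1}$) alters only the distances from that pendant to the remaining vertices. Using that the Perron entries on the spine are largest at the two ends and increase monotonically toward each end from the middle (a fact I would derive from the eigenvalue equation), the net change in $x^{\top} D x$ is positive. Iterating clusters all $k$ triples at the two ends, leaving as candidates precisely $T = T(n,a,b)$ with $a + b = k$ and, without loss of generality, $a \le b$.

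Phase (iii), balancing, is the main obstacle: one must show $\rho(T(n,a-1,b+1)) < \rho(T(n,a,b))$ whenever $1 \le a \le b$ and $a + b = k$. Since the two hypertrees differ by moving a whole triple across the spine, the Perron vector changes nontrivially and direct reuse of $x$ from either side does not immediately yield the comparison. My plan is to take $y$ to be the Perron vector of $T(n,a,b)$ and exploit its quasi-symmetry (strict symmetry when $a = b$, nearly so when $b = a+1$): this forces a monotone profile of $y$ on the spine with entries growing toward both ends, and allows one to pair up contributions in $y^{\top}(D(T(n,a,b)) - D(T(n,a-1,b+1)))y$ across the approximate axis of symmetry, extracting strict positivity whenever $a \le b$. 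Uniqueness in the theorem then follows by tracking the equality cases in each phase.
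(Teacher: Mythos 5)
Your three-phase skeleton (straighten the tree into a loose path, push the $3$-edges to the ends, balance the two ends) matches the paper's overall strategy, and phases (i) and (ii) correspond roughly to its Claims 1--3, though the paper runs those steps on $\sigma$-comparisons (its Lemmas \ref{3} and \ref{edge3}) rather than on a monotone Perron profile along a general mixed-size loose path, which you assert but which is itself a hard fact (the paper needs all of Section 3 to establish such monotonicity, and only for the specific trees $T(n,a,b)$ under extra hypotheses). The decisive problem, however, is phase (iii), which you correctly identify as the main obstacle but do not overcome: your Rayleigh inequality points the wrong way. Taking $y=x(T(n,a,b))$ and showing $y^{\top}\bigl(D(T(n,a,b))-D(T(n,a-1,b+1))\bigr)y>0$ only yields $\rho(T(n,a,b))>y^{\top}D(T(n,a-1,b+1))y$, while Rayleigh gives $\rho(T(n,a-1,b+1))\geq y^{\top}D(T(n,a-1,b+1))y$; these two facts do not combine to compare the two spectral radii. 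To conclude $\rho(T(n,a-1,b+1))<\rho(T(n,a,b))$ by a Rayleigh argument you must test with the Perron vector of the \emph{unbalanced} tree $T(n,a-1,b+1)$ and show its quadratic form increases when the metric is replaced by that of the balanced tree.

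This is not a cosmetic fix, because the quasi-symmetry you plan to exploit belongs to the wrong graph: the Perron vector of the balanced tree is (nearly) symmetric, but the vector you actually need to control is that of the unbalanced tree, which is genuinely asymmetric, and quantifying that asymmetry precisely is where all the work lies. In the paper this is Lemma \ref{ab-2}: one starts from $T=T(n,a,b)$ with $b\geq a+2$, moves the single pendant vertex $w_{\ell-b}$ from $e_{\ell-b}$ to $e_{a+1}$, and bounds $x^{\top}(D(T')-D(T))x$ from below by $x_{w_{\ell-b}}W$ with $W=(r-1)(\sigma_T(A)-\sigma_T(B))+C$; proving $W>0$ consumes Lemmas \ref{ab1}, \ref{ab-1}, \ref{ab} (delicate entrywise comparisons of the asymmetric Perron vector, proved by induction along the spine and by contradiction) together with the lower bound on $\rho(T)$ in terms of the minimum transmission from Lemma \ref{sum}. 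None of these estimates, nor any substitute for them, appears in your sketch, so the balancing step --- and with it the theorem --- remains unproved. (A smaller point: in phase (i) your justification ignores the change in distances between $B_1$ and $B_2$, which is exactly where the $\sigma$-hypothesis must enter; the correct formulation is the paper's Lemma \ref{3}, where the comparison is between two branches that are \emph{not} moved.)
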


In terms of graphs, Theorem \ref{t0} may be rephrased as:

\begin{Theorem} \label{t1}
Suppose that  $G\in \mathcal{C}(n,k)$ and all cycles of $G$ are triangles, where $1\leq k\leq \lfloor \frac{n-1}{2} \rfloor$.
Then $\rho(G)\leq \rho (S(\lfloor\frac{k}{2}\rfloor, \lceil\frac{k}{2}\rceil;n-2k-1))$ with equality if and only if $G\cong S(\lfloor\frac{k}{2}\rfloor, \lceil\frac{k}{2}\rceil;n-2k-1)$.
\end{Theorem}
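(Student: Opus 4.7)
Let $T^*$ be any hypertree in the class realizing $\max \rho$, and let $x > 0$ be the Perron eigenvector of $D(T^*)$. The central tool is the Rayleigh comparison: for any hypertree $T'$ on the same vertex set, $\rho(T') - \rho(T^*) \ge x^\top (D(T') - D(T^*)) x / (x^\top x)$, so at every structural deviation from $T(n, \lfloor k/2 \rfloor, \lceil k/2 \rceil)$ it suffices to exhibit a modification $T' \ne T^*$ (still in the class) making the right-hand side strictly positive, contradicting the choice of $T^*$. The plan is then three reduction steps.

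\textbf{Straightening.} I first show that $O_{T^*}$ is a caterpillar, via a grafting lemma: if a vertex $v$ lies on a long path of $T^*$ and also carries a non-trivial side-subhypertree $S$, then regrafting $S$ to hang off an appropriately chosen end of that path strictly increases $x^\top D(T^*) x$. The proof tracks which pairs of vertices change distance under the regrafting and by how much, and exploits the monotonicity of $x$ along the spine --- a consequence of the eigenvalue equation $\rho\, x_v = \sum_u d(v,u)\, x_u$ together with symmetry properties in the spirit of \cite{WZ2}. Iterating reduces $T^*$ to a hypertree whose underlying graph is a caterpillar.

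\textbf{Packing and balancing.} Next I show that all $k$ size-$3$ edges of $T^*$ must sit at the two ends of the spine. For any size-$3$ edge interior to the spine, swap it with a neighboring size-$2$ edge closer to the nearer end; the pendant vertex of the size-$3$ edge then moves farther from a majority of other vertices and closer to only a few, and the $x$-weighted change in the distance sum is strictly positive. Iterating, $T^* \cong T(n, a, b)$ for some $a + b = k$ with $a \le b$. The last step is balancing: if $a \le b - 2$, I compare with $T(n, a + 1, b - 1)$, which differs from $T(n, a, b)$ by transporting one pendant attachment from the longer arm to the shorter one. Using the near-symmetric structure of the Perron eigenvector of $T(n, a, b)$, the corresponding Rayleigh change is strictly positive, forcing $a = \lfloor k/2 \rfloor$ and $b = \lceil k/2 \rceil$.

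\textbf{Main obstacle.} The straightening step is the technical heart of the argument. As the introduction notes, the cut-vertex lemma \cite[Lemma 5.1]{SMS} does not cover all cacti --- for instance, when a triangle is glued at an interior degree-two vertex, so that no cut-vertex removal produces three components --- so one cannot simply invoke it as a black box. The hypertree reformulation is what makes the fix manageable: the regrafting becomes just reattaching a subhypertree at a different vertex, and the induced change in the distance matrix is easy to write down explicitly as a sum of $\pm 1$'s on prescribed vertex pairs. The remaining work is an entry-wise monotonicity estimate on $x$ along the spine, which I would prove by induction on the distance from the regrafting vertex, using the eigenvalue equation; once this is established, Steps 2 and 3 reduce to routine Rayleigh comparisons.
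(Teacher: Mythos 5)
Your three-step outline (straighten to a loose path, pack the size-3 edges to the two ends, then balance $a$ and $b$) matches the paper's overall strategy, and Steps 1 and 2 do go through essentially as you describe: the paper handles them with two cited grafting lemmas from \cite{WZ2} (moving edges off a vertex of degree $\ge 3$, and moving a vertex between edges) plus one short local estimate, the criterion for the direction of each move being the Perron-weighted mass $\sigma$ of the two sides rather than a vertex count. But you have inverted the difficulty assessment, and this creates a genuine gap. The balancing step, which you dismiss as a ``routine Rayleigh comparison'' justified by ``the near-symmetric structure of the Perron eigenvector of $T(n,a,b)$,'' is in fact the technical heart of the paper: it occupies all of Section 3 and Lemma \ref{ab-2}, i.e.\ most of the paper's length. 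The Perron vector of $T(n,a,b)$ with $b\ge a+2$ is \emph{not} near-symmetric in any usable a priori sense --- the paper must prove, entry by entry and by delicate inductions along the spine (Lemmas \ref{ab1}, \ref{ab-1}, \ref{ab}), which of the two ends carries more mass, and the answer depends on whether $b\ge \ell/2$ or $b<\ell/2$.

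Concretely, when you transport the pendant vertex $w_{\ell-b}$ from the long arm to the short arm, the Rayleigh increment is $x_{w_{\ell-b}}\,W$ with $W=(r-1)\bigl(\sigma_T(A)-\sigma_T(B)\bigr)+C$, where $A$ is the far cluster, $B$ the near cluster, and $C$ a signed sum over the spine. The set $A$ contains more vertices than $B$, but the paper shows the individual Perron entries near $B$ are \emph{larger} (Lemma \ref{ab-1}(ii)), so the two effects compete and the sign of $W$ is not determined by any symmetry heuristic. The paper closes this by assuming $W\le 0$, deriving from the eigen-equations an inequality of the form $\frac{2r-1}{r-1}W>\bigl(\rho(T)-c(a,r)\bigr)(x_{v_{a+1}}-x_{v_{\ell-b+1}})$, and then invoking a separately proved transmission lower bound $\rho(T)>c(a,r)$ (Lemma \ref{sum}) together with the positivity of $x_{v_{a+1}}-x_{v_{\ell-b+1}}$ (itself Lemma \ref{ab}) to force $W>0$. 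None of this is routine, and your proposal supplies no substitute for it; as written, the final step of your argument is an assertion, not a proof.
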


Let $G$ be a graph with maximum distance spectral radius in $\mathcal{C}(n, k)$ for $k\ge 1$, then all cycles of $G$ are triangles. So, by Theorem \ref{t1}, $G\cong S(\lfloor\frac{k}{2}\rfloor, \lceil\frac{k}{2}\rceil;n-2k-1)$. That is,  Conjecture \ref{00} is true.

\section{Preliminaries}

Let $G$ be a connected hypergraph. Since $D(G)$ is irreducible, by
Perron-Frobenius theorem, $\rho(G)$ is simple and there is a unique
unit positive eigenvector  corresponding to $\rho(G)$,
which is called the distance Perron vector of $G$, denoted by $x(G)$.

Let $V(G)=\{v_1,\dots,v_n\}$ and $x=(x_{v_1},\dots,x_{v_{n}})^{T}\in\mathbb{R}^{n}$. Then
\[
x^{\top}D(G)x=2\sum_{\{u,v\}\subseteq V(G)}d_G(u,v)x_ux_v.
 \]
If $x$ is unit and $x$ has at least one nonnegative component, then by Rayleigh's principle, we have  $\rho(G)\geq x^{\top}D(G)x$ with equality   if and only if $x=x(G)$.

For $x=x(G)$ and each $u\in V(G)$, we have
\[
\rho(G)x_u=\sum_{v\in V(G)}d_G(u,v)x_v,
 \]
which is called the distance  eigenequation of $G$ at $u$.


For  a connected hypergraph $G$ with  $V_1\subseteq V(G)$, let $\sigma_{G}(V_1)$ be the sum of the entries of the distance Perron vector of $G$ corresponding to the vertices in $V_1$. Furthermore, if all  the vertices of $V_1$ induce a connected subhypergraph $H$ of $G$, then we write $\sigma_{G}(H)$ instead of $\sigma_{G}(V_1)$.

For $e\in E(G)$, let $G-e$ be the subhypergraph of $G$ obtained by deleting $e$.

Here, we give a result that will be used frequently in the next section.

\begin{Lemma}  \label{66}
Let $T$ be a hypertree with two edges, say $e_1$ and $e_2$. Suppose that $u_i,v_i \in e_i$ for $i=1,2$, and $d_T(u_1, u_2)=d_T(v_1, v_2)+2$.
For $i=1,2$, let $T_i$ be the component of  $T-e_i$ containing $u_i$ and $A_i=\{w\in V(T): d_T(w,u_i)=d_T(w,v_i)\}$. Let $x=x(T)$.
\begin{enumerate}
\item[(i)]
\begin{align*}
&\rho(T)(x_{u_1}-x_{u_2})-\rho(T)(x_{v_1}-x_{v_2})\\
=& 2(\sigma_{T}(T_2)-\sigma_{T}(T_1))+\sigma_{T}(A_2)-\sigma_{T}(A_1).
\end{align*}

\item[(ii)] If  $e_i\setminus\{u_i,v_i\}=\{w_i\}$ and $\mbox{deg}_T(w_i)=1$  for $i=1,2$, then
\[
(\rho(T)+1)(x_{w_1}-x_{w_2})-\rho(T)(x_{v_1}-x_{v_2})=x_{w_2}-x_{w_1}+\sigma_{T}(T_2)-\sigma_{T}(T_1)
\]
and
\[
\rho(T)(x_{u_1}-x_{u_2})-(\rho(T)+1)(x_{w_1}-x_{w_2})=\sigma_{T}(T_2)-\sigma_{T}(T_1).
\]
\end{enumerate}
\end{Lemma}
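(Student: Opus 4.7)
The plan is to apply the distance eigenequation $\rho(T)\, x_y = \sum_{w \in V(T)} d_T(y,w)\, x_w$ at the distinguished vertices and combine the resulting equations so that each target identity reduces to a sum $\sum_w c(w)\, x_w$ whose coefficient $c(w)$ depends only on where $w$ lies in the component decompositions of $T - e_1$ and $T - e_2$.

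For part~(i), combining the eigenequations at $u_1, u_2, v_1, v_2$ gives
\[
\rho(T)\bigl[(x_{u_1}-x_{u_2})-(x_{v_1}-x_{v_2})\bigr] = \sum_{w \in V(T)} \bigl[g_1(w)-g_2(w)\bigr] x_w,
\]
where $g_i(w) := d_T(u_i, w) - d_T(v_i, w)$. Since $u_i, v_i \in e_i$ and $T$ is a hypertree, deleting $e_i$ partitions $V(T)$ into components indexed by the vertices of $e_i$, and a routine argument shows $g_i(w) = -1$ on $T_i$, $g_i(w) = +1$ on the $v_i$-component of $T - e_i$, and $g_i(w) = 0$ exactly on $A_i$. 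The hypothesis $d_T(u_1, u_2) = d_T(v_1, v_2) + 2$ then forces $T_1 \cup A_1$ to lie entirely inside the $v_2$-component of $T - e_2$, and symmetrically $T_2 \cup A_2$ inside the $v_1$-component of $T - e_1$; in particular the four sets $T_1, T_2, A_1, A_2$ are pairwise disjoint, and their complement (the ``middle region'') is exactly where $g_1 = g_2 = 1$. Over these five regions the coefficient $g_1 - g_2$ takes the values $-2, +2, -1, +1, 0$ respectively, which directly yields the stated formula.

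For part~(ii), the pendant hypotheses $\mbox{deg}_T(w_i) = 1$ with $e_i \setminus \{u_i, v_i\} = \{w_i\}$ force $A_i = \{w_i\}$ and hence $\sigma_T(A_i) = x_{w_i}$. I would run the same strategy with $u_i$ replaced by $w_i$: since $w_i$ is incident to only $e_i$, a short analysis gives $d_T(w_i, w) - d_T(v_i, w) = 0$ on $T_i$, $= +1$ on the $v_i$-component of $T - e_i$ minus $w_i$, and $= -1$ at $w_i$. The parallel case analysis produces
\[
\rho(T)\bigl[(x_{w_1}-x_{w_2})-(x_{v_1}-x_{v_2})\bigr] = \sigma_T(T_2) - \sigma_T(T_1) - 2(x_{w_1}-x_{w_2}),
\]
which rearranges to the first stated identity. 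The second identity then follows by subtracting this first identity from the identity in part~(i) and substituting $\sigma_T(A_i) = x_{w_i}$; the leftover $2(\sigma_T(T_2)-\sigma_T(T_1))$ and $x_{w_1}, x_{w_2}$ terms telescope to exactly $\sigma_T(T_2) - \sigma_T(T_1)$.

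The main obstacle is not algebraic but combinatorial: one must verify carefully that the distance condition $d_T(u_1, u_2) = d_T(v_1, v_2) + 2$ really does force the inclusion pattern of the components of $T - e_1$ and $T - e_2$ described above, so that the five regions partition $V(T)$ with exactly the coefficient values claimed. Once this structural picture is pinned down, the remainder is routine manipulation of the distance eigenequations.
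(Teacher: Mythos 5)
Your proposal is correct and follows essentially the same route as the paper: both arguments rest on the distance eigenequations together with the observation that $d_T(u_i,w)-d_T(v_i,w)$ equals $-1$ on $T_i$, $+1$ on the $v_i$-component of $T-e_i$, and $0$ exactly on $A_i$, with the hypothesis $d_T(u_1,u_2)=d_T(v_1,v_2)+2$ pinning down how these components nest. The only (immaterial) differences are that the paper adds two separate two-vertex differences rather than doing the four-vertex combination in one five-region computation, and obtains the second identity of (ii) from fresh eigenequations at $u_1,w_1,w_2,u_2$ rather than by subtracting the first identity of (ii) from (i) as you do.
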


\begin{proof}
Let $T'_i$ be the component of  $T-e_i$ containing $v_i$ for $i=1,2$. Evidently, $V(T'_1)=(V(T'_1)\cap V(T'_2))\cup A_2 \cup V(T_2)$, and  $V(T'_1)\cap V(T'_2),  A_2,  V(T_2)$ are disjoint.
From the distance  eigenequations of $T$ at $u_1$ and $v_1$, we have
\begin{align*}
\rho(T)(x_{u_1}-x_{v_1})=&\sum_{w\in V(T)}(d_T(u_1,w)-d_T(v_1,w))x_w\\
=&\sigma_{T}(T'_1)-\sigma_{T}(T_1)\\
=&\sigma_{T}(V(T'_1)\cap V(T'_2))+\sigma_{T}(A_2)+\sigma_{T}(T_2)-\sigma_{T}(T_1)
\end{align*}
as $d_T(u_1,w)-d_T(v_1,w)=1$ if $w\in V(T'_1)$, $-1$ if $w\in V(T_1)$, and $0$ otherwise.
Similarly,
\begin{align*}
\rho(T)(x_{v_2}-x_{u_2})=&\sigma_{T}(T_2)-\sigma_{T}(T'_2)\\
=&\sigma_{T}(T_2)-\sigma_{T}(V(T'_1)\cap V(T'_2))-\sigma_{T}(A_1)-\sigma_{T}(T_1).
\end{align*}
So
\[
\rho(T)(x_{u_1}-x_{v_1})+\rho(T)(x_{v_2}-x_{u_2})=2(\sigma_{T}(T_2)-\sigma_{T}(T_1))+\sigma_{T}(A_2)-\sigma_{T}(A_1),
\]
from which Item (i) follows.

Now suppose that $\mbox{deg}_T(w_i)=1$ and $e_i\setminus\{u_i,v_i\}=\{w_i\}$ for $i=1,2$. Then  the component of  $T-e_i$ containing $w_i$ has exactly one vertex $w_i$ and $\{w\in V(T): d_T(w,w_i)=d_T(w,v_i)\}=V(T_i)$ for $i=1,2$.
Item (i) reduces to
\[
\rho(T)(x_{w_1}-x_{w_2})-\rho(T)(x_{v_1}-x_{v_2})=2(x_{w_2}-x_{w_1})+\sigma_{T}(T_2)-\sigma_{T}(T_1),
\]
from which the first equation in Item (ii) follows.

From  the distance eigenequations of $T$ at $u_1$,  $w_1$, $w_2$ and $u_2$, we have
\[
\rho(T)(x_{u_1}-x_{w_1})=x_{w_1}-\sigma_{T}(T_1)
\]
and
\[
\rho(T)(x_{w_2}-x_{u_2})=\sigma_{T}(T_2)-x_{w_2}.
\]
So
$\rho(T)(x_{u_1}-x_{u_2})-\rho(T)(x_{w_1}-x_{w_2})=x_{w_1}-x_{w_2}+\sigma_{T}(T_2)-\sigma_{T}(T_1)$, from which the second equation in Item (ii) follows.
\end{proof}

\section{Distance spectral properties of $T(n,a,b)$}

In this section, we give some properties related to the entries of the Perron vector of $T(n,a,b)$, which will be used in subsequent proof.

\begin{Lemma}  \label{ab1}
 Let $T=T(n,a,b)$, where $a\geq 0$, $b\geq a+2$ and $2(a+b)<n-1$. Let $\ell=n-a-b$. Let $x=x(T)$.
 Let $e$ be the edge containing both $v_{\ell-b}$ and $v_{\ell-b+1}$.
 Let $T_1$ and $T_2$ be the components of  $T-e$ containing $v_{\ell-b}$
 and $v_{\ell-b+1}$, respectively.
 If $b\geq \frac{\ell}{2}$.
 then $\sigma_{T}(T_1)<\sigma_{T}(T_2)$.
\end{Lemma}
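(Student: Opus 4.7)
The plan is to first reduce the claim to a comparison between two specific entries of $x$ via the distance eigenequation, and then establish that entry inequality by exploiting the asymmetric structure of $T$.

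Subtracting the eigenequations for $v_{\ell-b}$ and $v_{\ell-b+1}$, the function $d_T(v_{\ell-b},\cdot)-d_T(v_{\ell-b+1},\cdot)$ takes the value $-1$ on $V(T_1)$, the value $+1$ on $V(T_2)$, and the value $0$ on the third vertex $w_{\ell-b}\in e$, so
\[
\rho(T)\bigl(x_{v_{\ell-b}}-x_{v_{\ell-b+1}}\bigr)=\sigma_T(T_2)-\sigma_T(T_1).
\]
The lemma is therefore equivalent to the strict entry inequality $x_{v_{\ell-b}}>x_{v_{\ell-b+1}}$, which I would prove by a reflection/surplus argument across the midpoint of $e$.

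Define an injection $\phi\colon V(T_1)\to V(T_2)$ by $\phi(v_{\ell-b-k+1})=v_{\ell-b+k}$ for $k=1,\dots,\ell-b$ and $\phi(w_j)=w_{\ell-j}$ for $j=1,\dots,a$. The hypothesis $b\ge\ell/2$ is precisely what makes $\phi$ land inside $V(T_2)$, and $b\ge a+2$ together with $2(a+b)<n-1$ makes the surplus set $V(T_2)\setminus\phi(V(T_1))$ (namely $v_{\ell-b+1},\dots,v_b$ and $w_{\ell-b+1},\dots,w_{\ell-a-1}$) nonempty. For each paired pair I would produce a linear relation from Lemma~\ref{66}: apply Lemma~\ref{66}(ii) to the pairs of pendant edges $(e_j,e_{\ell-j})$ for $j=1,\dots,a$, and apply Lemma~\ref{66}(i) to the pairs consisting of a $2$-edge from the plain-path portion of $T_1$ and its mirror image, a triangle edge in the saw of $T_2$. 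Each application yields an identity linking the four entries at the corners of the paired edges to $\sigma_T(T_2)-\sigma_T(T_1)$ (plus controlled correction terms involving the pendant entries). Summing (or telescoping) these relations so that the paired entry differences cancel should leave a strictly positive linear combination of the $x$-entries of the surplus vertices in $T_2$, forcing $\sigma_T(T_2)>\sigma_T(T_1)$.

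The principal obstacle is the type mismatch between the two sides: the middle of $T_1$ is a plain path of $2$-edges, whereas the path of $T_2$ consists entirely of $3$-edges, so the contributions coming from Lemma~\ref{66}(i) and Lemma~\ref{66}(ii) must be combined in just the right proportion for the pointwise entry differences to cancel out cleanly. A secondary subtlety is that the hypothesis $b\ge\ell/2$ is not cosmetic but essential, because it is exactly the condition that ensures $\phi$ injects into $V(T_2)$; when $b<\ell/2$ the left reflection overflows $T_2$ and the surplus can no longer dominate the pointwise deficit.
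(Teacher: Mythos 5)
Your setup is the right one --- the eigenequation at the central edge, a mirror pairing of $V(T_1)$ into $V(T_2)$, Lemma~\ref{66}(ii) on the paired pendant triangles and Lemma~\ref{66}(i) on the mixed $2$-edge/$3$-edge pairs, and a surplus in $T_2$ that should tip the balance. This is exactly the paper's cast of characters. But the engine you propose --- ``summing (or telescoping) these relations so that the paired entry differences cancel'' --- does not work, and this is a genuine gap rather than a presentational one. Each identity coming from Lemma~\ref{66} has the form
\[
\rho(T)(\delta_i-\delta_{i-1})=2\bigl(\sigma_T(T_2)-\sigma_T(T_1)\bigr)+2\sum_{j<i}\delta_j-(\text{positive surplus terms}),
\]
where $\delta_j$ denotes a paired entry difference. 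The unknown $\sigma_T(T_2)-\sigma_T(T_1)$ and the partial sums of the very differences you are trying to control appear in every relation; summing them makes these accumulate with weights rather than cancel, and no sign information falls out. The paper's proof instead argues by contradiction: assume $\sigma_T(T_1)\ge\sigma_T(T_2)$, so $\delta_0\le 0$ from the eigenequation at $v_{\ell-b},v_{\ell-b+1}$; then an outward induction from the central edge (Claims 1 and 2 in the paper) uses the assumed sign together with the already-established $\delta_j\le 0$ to show that \emph{every} vertex of $T_1$ has entry at most that of its mirror in $T_2$; only then does writing $\sigma_T(T_2)-\sigma_T(T_1)$ as the sum of these nonnegative mirror differences plus the strictly positive surplus entries give the contradiction. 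That contradiction-plus-induction structure is the missing idea; your reduction to $x_{v_{\ell-b}}>x_{v_{\ell-b+1}}$ is in any case circular (it is literally a rescaling of the quantity to be proved positive) and is never used afterward.

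Two smaller points. Your pairing of the pendant vertices, $\phi(w_j)=w_{\ell-j}$, is not the reflection about the central edge; the identities of Lemma~\ref{66}(ii), when anchored so that their correction terms are expressible through $\sigma_T(T_1)$, $\sigma_T(T_2)$ and previously handled entries, naturally pair $w_j$ with $w_{2(\ell-b)-j}$, which coincides with yours only when $b=\ell/2$. And your surplus set is misidentified on the path side: the unmatched $v$'s are $v_{2(\ell-b)+1},\dots,v_\ell$, not $v_{\ell-b+1},\dots,v_b$ (the latter are images under $\phi$). Neither slip is fatal, but both would surface as soon as you tried to make the telescoping explicit.
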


\begin{proof}
Let $\ell-b=q$. Then  $\sigma_{T}(T_1)=\sum_{j=1}^{q}x_{v_{j}}+\sum_{j=1}^{a}x_{w_{j}}$ and $\sigma_{T}(T_2)=\sum_{j=q+1}^{\ell}x_{v_{j}}+\sum_{j=q+1}^{\ell-1}x_{w_{j}}$.  As $b\geq \frac{\ell}{2}$, we have $2q\leq \ell$.
Suppose that $\sigma_{T}(T_1)\geq \sigma_{T}(T_2)$.

\noindent
{\bf Claim 1.}
$x_{v_{q-i}}\leq x_{v_{q+1+i}}$  for $0\leq i\leq q-a-1$.

We show this
by induction on $i$.
For $i=0$, from the distance eigenequations of $T$ at $v_q$ and $v_{q+1}$, we have
\[
\rho(T)(x_{v_q}-x_{v_{q+1}})=\sigma_{T}(T_2)-\sigma_{T}(T_1),
\]
so $x_{v_q}\leq x_{v_{q+1}}$.
Suppose that $1\leq i\leq q-a-1$ and $x_{v_{q-j}}\leq x_{v_{q+1+j}}$ for $0\leq j\leq i-1$.
Note that $\sigma_{T}(T_2)-\sigma_{T}(T_1)\le 0$, $\sum_{j=0}^{i-1}\left(x_{v_{q-j}}-x_{v_{q+1+j}}\right)\le 0$ and
$2\sum_{j=q+1}^{q+i}x_{w_{j}}-x_{w_{q+i}}>0$.
So, by Lemma \ref{66}(i), we have
\begin{align*}
&\rho(T)\left(x_{v_{q-i}}-x_{v_{q+1+i}}\right)-\rho(T)\left(x_{v_{q-(i-1)}}-x_{v_{q+1+(i-1)}}\right)\\
=&2\left(\sum_{j=q+1+i}^{\ell}x_{v_{j}}+\sum_{j=q+1+i}^{\ell-1}x_{w_{j}}\right)-2\left(\sum_{j=1}^{q-i}x_{v_{j}}+\sum_{j=1}^{a}x_{w_{j}}\right)+x_{w_{q+i}}\\
=&2\left(\sigma_{T}(T_2)-\sum_{j=0}^{i-1}x_{v_{q+1+j}}-\sum_{j=q+1}^{q+i}x_{w_{j}}\right)-2\left(\sigma_{T}(T_1)-\sum_{j=0}^{i-1}x_{v_{q-j}}\right)+x_{w_{q+i}}\\
=&2\left(\sigma_{T}(T_2)-\sigma_{T}(T_1)\right)+2\sum_{j=0}^{i-1}\left(x_{v_{q-j}}-x_{v_{q+1+j}}\right)
-\left(2\sum_{j=q+1}^{q+i}x_{w_{j}}-x_{w_{q+i}}\right)\\
<& 0,
\end{align*}
implying that $x_{v_{q-i}}-x_{v_{q+1+i}}< x_{v_{q-(i-1)}}-x_{v_{q+1+(i-1)}}\le 0$.
So $x_{v_{q-i}}< x_{v_{q+1+i}}$. This proves Claim 1.

\noindent
{\bf Claim 2.} $x_{v_{q-i}}\leq x_{v_{q+1+i}}$ and $x_{w_{q-i}}\leq x_{w_{q+i}}$ for $q-a\leq i\leq q-1$ with $a\geq 1$.

We show this by induction on $i$.
By Claim 1, $\sum_{j=0}^{q-a-1}\left(x_{v_{q-j}}-x_{v_{q+1+j}}\right)\leq 0$.
Then, by Lemma \ref{66}(ii), we have
\begin{align*}
&(\rho(T)+1)\left(x_{w_{a}}-x_{w_{2q-a}}\right)-\rho(T)\left(x_{v_{a+1}}-x_{v_{2q-a}}\right)\\
=&\sum_{j=2q+1-a}^{\ell}x_{v_{j}}+\sum_{j=2q-a}^{\ell-1}x_{w_{j}}-\left(\sum_{j=1}^{a}x_{v_{j}}+\sum_{j=1}^{a}x_{w_{j}}\right)\\
=&\sigma_{T}(T_2)-\sum_{j=0}^{q-a-1}x_{v_{q+1+j}}-\sum_{j=1}^{q-a-1}x_{w_{q+j}}-\left(\sigma_{T}(T_1)-\sum_{j=0}^{q-a-1}x_{v_{q-j}}\right)\\
=&(\sigma_{T}(T_2)-\sigma_{T}(T_1))+\sum_{j=0}^{q-a-1}\left(x_{v_{q-j}}-x_{v_{q+1+j}}\right)-\sum_{j=1}^{q-a-1}x_{w_{q+j}}\\
<& 0,
\end{align*}
so $(\rho(T)+1)(x_{w_{a}}-x_{w_{2q-a}})<\rho(T)(x_{v_{a+1}}-x_{v_{2q-a}})<0$, and thus $x_{w_{a}}<x_{w_{2q-a}}$.
On the other hand, by Lemma \ref{66}(ii) again, we have
\begin{align*}
&\rho(T)\left(x_{v_{a}}-x_{v_{2q+1-a}}\right)-(\rho(T)+1)\left(x_{w_{a}}-x_{w_{2q-a}}\right)\\
=&\sum_{j=2q+1-a}^{\ell}x_{v_{j}}+\sum_{j=2q+1-a}^{\ell-1}x_{w_{j}}-\left(\sum_{j=1}^{a}x_{v_{j}}+\sum_{j=1}^{a-1}x_{w_{j}}\right)\\
=&\sigma_{T}(T_2)-\sum_{j=0}^{q-a-1}x_{v_{q+1+j}}-\sum_{j=1}^{q-a}x_{w_{q+j}}-\left(\sigma_{T}(T_1)-\sum_{j=0}^{q-a-1}x_{v_{q-j}}-x_{w_a}\right)\\
=&(\sigma_{T}(T_2)-\sigma_{T}(T_1))+\sum_{j=0}^{q-a-1}\left(x_{v_{q-j}}-x_{v_{q+1+j}}\right)-\sum_{j=1}^{q-a-1}x_{w_{q+j}}+(x_{w_{a}}-x_{w_{2q-a}})\\
<& 0,
\end{align*}
so $\rho(T)(x_{v_{a}}-x_{v_{2q+1-a}})<(\rho(T)+1)(x_{w_{a}}-x_{w_{2q-a}})<0$, and thus
 $x_{v_{a}}<x_{v_{2q+1-a}}$. So Claim 2 is true for $i=q-a$.

Suppose that $q-a+1 \leq i\leq q-1$ with $a\geq 2$,  $x_{v_{q-j}}\leq x_{v_{q+1+j}}$ and $x_{w_{q-j}}\leq x_{w_{q+j}}$ for $q-a\leq j\leq i-1$. By Lemma \ref{66}(ii), we have
\begin{align*}
&(\rho(T)+1)\left(x_{w_{q-i}}-x_{w_{q+i}}\right)-\rho(T)\left(x_{v_{q-(i-1)}}-x_{v_{q+1+(i-1)}}\right)\\
=&\sum_{j=q+1+i}^{\ell}x_{v_{j}}+\sum_{j=q+i}^{\ell-1}x_{w_{j}}-\left(\sum_{j=1}^{q-i}x_{v_{j}}+\sum_{j=1}^{q-i}x_{w_{j}}\right)\\
=&\sigma_{T}(T_2)-\sum_{j=0}^{i-1}x_{v_{q+1+j}}-\sum_{j=1}^{i-1}x_{w_{q+j}}-\left(\sigma_{T}(T_1)-\sum_{j=0}^{i-1}x_{v_{q-j}}-\sum_{j=q-a}^{i-1}x_{w_{q-j}}\right)\\
=&(\sigma_{T}(T_2)-\sigma_{T}(T_1))
+\sum_{j=0}^{i-1}\left(x_{v_{q-j}}-x_{v_{q+1+j}}\right)
+\sum_{j=q-a}^{i-1}\left(x_{w_{q-j}}-x_{w_{q+j}}\right)-\sum_{j=1}^{q-a-1}x_{w_{q+j}}\\
<& 0,
\end{align*}
so $(\rho(T)+1)(x_{w_{q-i}}-x_{w_{q+i}})<\rho(T)(x_{v_{q-(i-1)}}-x_{v_{q+1+(i-1)}})\le 0$.
Thus $x_{w_{q-i}}<x_{w_{q+i}}$.
On the other hand, by Lemma \ref{66}(ii) again, we have
\begin{align*}
&\rho(T)\left(x_{v_{q-i}}-x_{v_{q+1+i}}\right)-(\rho(T)+1)\left(x_{w_{q-i}}-x_{w_{q+i}}\right)\\
=&\sum_{j=q+1+i}^{\ell}x_{v_{j}}+\sum_{j=q+1+i}^{\ell-1}x_{w_{j}}-\left(\sum_{j=1}^{q-i}x_{v_{j}}+\sum_{j=1}^{q-i-1}x_{w_{j}}\right)\\
=&\sigma_{T}(T_2)-\sum_{j=0}^{i-1}x_{v_{q+1+j}}-\sum_{j=1}^{i}x_{w_{q+j}}-\left(\sigma_{T}(T_1)-\sum_{j=0}^{i-1}x_{v_{q-j}}-\sum_{j=q-a}^{i}x_{w_{p-j}}\right)\\
=&(\sigma_{T}(T_2)-\sigma_{T}(T_1))+\sum_{j=0}^{i-1}\left(x_{v_{q-j}}-x_{v_{q+1+j}}\right)\\
&-\sum_{j=1}^{q-a-1}x_{w_{q+j}}+\sum_{j=q-a}^{i}\left(x_{w_{q-j}}-x_{w_{q+j}}\right)\\
<& 0,
\end{align*}
so $\rho(T)(x_{v_{q-i}}-x_{v_{q+1+i}})<(\rho(T)+1)(x_{w_{q-i}}-x_{w_{q+i}})<0$.
Thus $x_{v_{q-i}}<x_{v_{q+1+i}}$. This proves Claim 2.

Combining Claims 1 and 2, we conclude  that $x_{v_{q-i}}\leq x_{v_{q+1+i}}$  for $0\leq i\leq q-1$, and $x_{w_{q-i}}\leq x_{w_{q+i}}$ for $q-a\leq i\leq q-1$ with $a\geq 1$.
However, by letting $J=0$ if $b=\frac{\ell}{2}$ and  $J=\sum_{i=q}^{\ell-q-1}x_{v_{q+1+i}}+\sum_{i=q}^{\ell-q-1}x_{w_{q+i}}$ otherwise, we have
\begin{align*}
0 \geq & \sigma_{T}(T_2)-\sigma_{T}(T_1)\\
=&\sum_{i=0}^{\ell-q-1}x_{v_{q+1+i}}+\sum_{i=1}^{q-a-1}x_{w_{q+i}}+\sum_{i=q-a}^{\ell-q-1}x_{w_{q+i}}-\left(\sum_{i=0}^{q-1}x_{v_{q-i}}+\sum_{i=q-a}^{q-1}x_{w_{q-i}}\right)\\
=& \sum_{i=0}^{q-1}(x_{v_{q+1+i}}-x_{v_{q-i}})+\sum_{i=q-a}^{q-1}(x_{w_{q+i}}-x_{w_{q-i}})+\sum_{i=1}^{q-a-1}x_{w_{q+i}}+J\\
>& 0,
\end{align*}
a contradiction.  It thus follows that $\sigma_{T}(T_1)<\sigma_{T}(T_2)$.
\end{proof}

\begin{Lemma}  \label{ab-1}
 Let $T=T(n,a,b)$, where $a\geq 0$, $b\geq a+2$ and $2(a+b)<n-1$. Let $\ell=n-a-b$. Let $p=\lfloor \frac{\ell}{2}\rfloor$ and $p_1=\lceil \frac{\ell}{2}\rceil$. Let $x=x(T)$. Suppose that $b< \frac{\ell}{2}$.  Then

 (i) $x_{v_p}>x_{v_{p_1+1}};$

 (ii) $x_{v_{i}}> x_{v_{\ell+1-i}}$ and $x_{w_{i}}> x_{w_{\ell-i}}$ for $i=1,\dots,a$ with $a\ge 1$, and $x_{v_{a+1}}> x_{v_{\ell-a}}$.
\end{Lemma}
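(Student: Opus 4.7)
The plan is to adapt the contradiction-plus-induction strategy of Lemma~\ref{ab1}: prove (i) by contradiction, and then deduce (ii) by iteratively applying Lemma~\ref{66} to propagate the comparison outward from the midpoint toward the two ends of the path.

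For (i), let $T_1^{\ast}$ and $T_2^{\ast}$ denote the two ``halves'' of $T$ on either side of the midpoint, namely the vertex sets $\{v_1,\dots,v_p,w_1,\dots,w_a\}$ and $\{v_{p_1+1},\dots,v_\ell,w_{\ell-b},\dots,w_{\ell-1}\}$ (when $\ell$ is odd, $v_{p_1}$ belongs to neither, being equidistant from $v_p$ and $v_{p_1+1}$). A direct computation with the distance eigenequations at $v_p$ and $v_{p_1+1}$, grouping vertices by side of the midpoint, shows that $x_{v_p}-x_{v_{p_1+1}}$ is a positive multiple of $\sigma_T(T_2^{\ast})-\sigma_T(T_1^{\ast})$. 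Assume for contradiction that $x_{v_p}\le x_{v_{p_1+1}}$, so $\sigma_T(T_2^{\ast})\le \sigma_T(T_1^{\ast})$. Then, mirroring Claims 1 and 2 of Lemma~\ref{ab1}, I would induct on $i$ to establish $x_{v_{p-i}}\le x_{v_{p_1+1+i}}$ for $i=0,\dots,p-1$ together with $x_{w_i}\le x_{w_{\ell-i}}$ for $i=1,\dots,a$. At each step, apply Lemma~\ref{66}(i) when the flanking edges $\{v_{p-i-1},v_{p-i}\}$ and $\{v_{p_1+1+i},v_{p_1+2+i}\}$ are both of size two, and Lemma~\ref{66}(ii) once either becomes a triangle; combined with the previously established nonpositive differences and the assumed inequality, a computation analogous to that in Lemma~\ref{ab1} keeps each new difference nonpositive (the pendant terms contributed by triangle edges enter the recurrence with an overall negative sign, as in Lemma~\ref{ab1}).

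With the full chain of inequalities in hand, I would decompose
\[
\sigma_T(T_2^{\ast})-\sigma_T(T_1^{\ast})=\sum_{i=1}^{p}(x_{v_{\ell+1-i}}-x_{v_i})+\sum_{j=1}^{a}(x_{w_{\ell-j}}-x_{w_j})+\sum_{j=a+1}^{b}x_{w_{\ell-j}}.
\]
The first two sums are nonnegative by the chain, while the third is strictly positive since it collects the $b-a\ge 2$ unpaired right pendants. Hence $\sigma_T(T_2^{\ast})>\sigma_T(T_1^{\ast})$, contradicting the assumption and proving (i).

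For (ii), part (i) supplies $D_0:=x_{v_p}-x_{v_{p_1+1}}>0$. Running the same Lemma~\ref{66} recurrence with a strict positive at the base and the pendant terms again in the correct direction yields $D_i:=x_{v_{p-i}}-x_{v_{p_1+1+i}}>0$ for $i=0,\dots,p-1$; the step $i=p-a-1$ gives $x_{v_{a+1}}>x_{v_{\ell-a}}$, and the subsequent steps (entering the triangle regions at both ends and using Lemma~\ref{66}(ii)) produce $x_{v_i}>x_{v_{\ell+1-i}}$ and $x_{w_i}>x_{w_{\ell-i}}$ for $1\le i\le a$. The main technical obstacle will be the bookkeeping: the parity of $\ell$ changes the base case, and the induction must be broken into distinct ``path'' and ``triangle'' phases that begin at different values of $i$ on the two sides (since $a<b$), each phase requiring the appropriate version of Lemma~\ref{66} and its own formulas for the components $T_1$, $T_2$ and the equidistant set $A$ appearing in that lemma.
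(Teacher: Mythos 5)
Your part (i) is essentially the paper's argument: assume the midpoint difference is nonpositive, deduce $\sigma_T(T_2)\le\sigma_T(T_1)$ for the two halves, propagate $x_{v_{p-i}}\le x_{v_{p_1+1+i}}$ outward by Lemma~\ref{66}, and contradict this by counting the $b-a$ unpaired right pendants. (One bookkeeping caveat: when $\ell$ is odd and $b=p$ the middle vertex $v_{p_1}$ sits in a triangle, so the eigenequation gives $\rho(T)(x_{v_p}-x_{v_{p_1+1}})=2(\sigma_T(T_2)-\sigma_T(T_1))+x_{w_{p_1}}$ rather than a clean multiple of a $\sigma$-difference; the paper treats this as a separate case.)

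Part (ii), however, has a genuine gap. You propose to rerun the same outward recurrence starting from the now strictly positive $D_0=x_{v_p}-x_{v_{p_1+1}}$ and claim the pendant terms are ``again in the correct direction.'' They are not. In the Lemma~\ref{66}(i) identity
\[
\rho(T)(D_i-D_{i-1})=2(\sigma_T(T_2)-\sigma_T(T_1))+2\sum_{j=0}^{i-1}D_j-\Bigl(2\sum_{j}x_{w_j}-x_{w_{p_1+i}}\Bigr),
\]
the pendant contributions are \emph{subtracted}. Under the contradiction hypothesis of part (i) every term on the right is $\le 0$ or is a subtracted positive quantity, so nonpositivity of the $D_j$ propagates; but once $\sigma_T(T_2)-\sigma_T(T_1)>0$ and the $D_j$ are positive, the right-hand side mixes positive and negative terms and its sign is indeterminate, so you cannot conclude $D_i>0$. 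Indeed the lemma does not even assert $x_{v_{p-i}}>x_{v_{p_1+1+i}}$ for all $i$; it only asserts the inequalities near the two \emph{ends}. The paper proves (ii) by a second, separate contradiction: assume $x_{v_1}\le x_{v_\ell}$ and propagate \emph{inward} from the ends (where the extra pendants of the longer right block enter the Lemma~\ref{66} identities with a \emph{favorable} sign, as the quantities $E,F\ge 0$ in the paper), obtaining $x_{v_i}\le x_{v_{\ell+1-i}}$ all the way to $i=p$ and hence $x_{v_p}\le x_{v_{p_1+1}}$, which contradicts (i); then the reverse strict inequalities at the ends follow by the same inward induction started from $x_{v_1}>x_{v_\ell}$. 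You would need to replace your outward propagation with such an end-to-middle argument (or supply a genuinely new estimate controlling the subtracted pendant terms) for part (ii) to go through.
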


\begin{proof}
Note that $b\le p$.
We prove the Item (i) by considering two cases.

\noindent \textbf{Case 1.} $\ell$ is odd and $b=p$, i.e., $b=\frac{\ell-1}{2}$.

Let $T_1$ be the component of $T-\{v_b, v_{b+1}\}$ containing $v_{b}$, and
$T_2$  the component of $T-\{v_{b+1},w_{b+1},v_{b+2} \}$ containing $v_{b+2}$.
Note that $\sigma_{T}(T_1)=\sum_{j=1}^{b}x_{v_{j}}+\sum_{j=1}^{a}x_{w_{j}}$ and $\sigma_{T}(T_2)=\sum_{j=b+2}^{2b+1}x_{v_{j}}+\sum_{j=b+2}^{2b}x_{w_{j}}$.

From the distance eigenequations of $T$ at $v_b$ and $v_{b+2}$, we have
\begin{equation}\label{cc}
\rho(T)(x_{v_b}-x_{v_{b+2}})=2(\sigma_{T}(T_2)-\sigma_{T}(T_1))+x_{w_{b+1}}.
\end{equation}

Suppose that $2(\sigma_{T}(T_2)-\sigma_{T}(T_1))+x_{w_{b+1}}\leq 0$.
 Then $\sigma_{T}(T_2)< \sigma_{T}(T_1)$.

\noindent
{\bf Claim i.} $x_{v_{b-i}}\leq x_{v_{b+2+i}}$  for $0\leq i\leq b-a-1$.

We show this
by induction on $i$.
For $i=0$, from (\ref{cc}),
we have $x_{v_b}\leq x_{v_{b+2}}$.
Suppose that $1\leq i\leq b-a-1$ and $x_{v_{b-j}}\leq x_{v_{b+2+j}}$ for $0\leq j\leq i-1$.
Then,
we have by Lemma \ref{66}(i) that
\begin{align*}
&\rho(T)\left(x_{v_{b-i}}-x_{v_{b+2+i}}\right)-\rho(T)\left(x_{v_{b-(i-1)}}-x_{v_{b+2+(i-1)}}\right)\\
=&2\left(\sum_{j=b+2+i}^{2b+1}x_{v_{j}}+\sum_{j=b+2+i}^{2b}x_{w_{j}}\right)-2\left(\sum_{j=1}^{b-i}x_{v_{j}}+\sum_{j=1}^{a}x_{w_{j}}\right)+x_{w_{b+1+i}}\\
=&2\left(\sigma_{T}(T_2)-\sum_{j=0}^{i-1}x_{v_{b+2+j}}-\sum_{j=b+2}^{b+1+i}x_{w_{j}}\right)-2\left(\sigma_{T}(T_1)-\sum_{j=0}^{i-1}x_{v_{b-j}}\right)+x_{w_{b+1+i}}\\
=&2(\sigma_{T}(T_2)-\sigma_{T}(T_1))+2\sum_{j=0}^{i-1}\left(x_{v_{b-j}}-x_{v_{b+2+j}}\right)
-\left(2\sum_{j=b+2}^{b+1+i}x_{w_{j}}-x_{w_{b+1+i}}\right)\\
<& 0,
\end{align*}
implying that $x_{v_{b-i}}-x_{v_{b+2+i}}< x_{v_{b-(i-1)}}-x_{v_{b+2+(i-1)}}\le 0$, so $x_{v_{b-i}}< x_{v_{b+2+i}}$. Claim i follows.

\noindent
{\bf Claim ii.} $x_{v_{b-i}}\leq x_{v_{b+2+i}}$ and $x_{w_{b-i}}\leq x_{w_{b+1+i}}$ for $b-a\leq i\leq b-1$ with $a\geq 1$.

We show this by induction on $i$. By Claim i, $\sum_{j=0}^{b-a-1}\left(x_{v_{b-j}}-x_{v_{b+2+j}}\right)\leq 0$.
By Lemma \ref{66}(ii), we have
\begin{align*}
&(\rho(T)+1)\left(x_{w_{a}}-x_{w_{2b+1-a}}\right)-\rho(T)\left(x_{v_{a+1}}-x_{v_{2b-a+1}}\right)\\
=&\sum_{j=2b+2-a}^{2b+1}x_{v_{j}}+\sum_{j=2b+1-a}^{2b}x_{w_{j}}-\left(\sum_{j=1}^{a}x_{v_{j}}+\sum_{j=1}^{a}x_{w_{j}}\right)\\
=&\sigma_{T}(T_2)-\sum_{j=0}^{b-a-1}x_{v_{b+2+j}}-\sum_{j=1}^{b-a-1}x_{w_{b+1+j}}-\left(\sigma_{T}(T_1)-\sum_{j=0}^{b-a-1}x_{v_{b-j}}\right)\\
=&(\sigma_{T}(T_2)-\sigma_{T}(T_1))+\sum_{j=0}^{b-a-1}\left(x_{v_{b-j}}-x_{v_{b+2+j}}\right)-\sum_{j=1}^{b-a-1}x_{w_{b+1+j}}\\
<& 0,
\end{align*}
implying that $(\rho(T)+1)(x_{w_{a}}-x_{w_{2b+1-a}})<\rho(T)(x_{v_{a+1}}-x_{v_{2b-a+1}})<0$, so
 $x_{w_{a}}<x_{w_{2b+1-a}}$.
On the other hand, by Lemma \ref{66}(ii) again, we have
\begin{align*}
&\rho(T)\left(x_{v_{a}}-x_{v_{2b+2-a}}\right)-(\rho(T)+1)\left(x_{w_{a}}-x_{w_{2b+1-a}}\right)\\
=&\sum_{j=2b+2-a}^{2b+1}x_{v_{j}}+\sum_{j=2b+2-a}^{2b}x_{w_{j}}-\left(\sum_{j=1}^{a}x_{v_{j}}+\sum_{j=1}^{a-1}x_{w_{j}}\right)\\
=&\sigma_{T}(T_2)-\sum_{j=0}^{b-a-1}x_{v_{b+2+j}}-\sum_{j=1}^{b-a}x_{w_{b+1+j}}-\left(\sigma_{T}(T_1)-\sum_{j=0}^{b-a-1}x_{v_{b-j}}-x_{w_a}\right)\\
=&(\sigma_{T}(T_2)-\sigma_{T}(T_1))
+\sum_{j=0}^{b-a-1}\left(x_{v_{b-j}}-x_{v_{b+2+j}}\right)\\
&+(x_{w_{a}}-x_{w_{2b+1-a}})-\sum_{j=1}^{b-a-1}x_{w_{b+1+j}}\\
<& 0,
\end{align*}
so $\rho(T)(x_{v_{a}}-x_{v_{2b+2-a}})<(\rho(T)+1)(x_{w_{a}}-x_{w_{2b+1-a}})<0$, and
 thus $x_{v_{a}}<x_{v_{2b+2-a}}$. So Claim ii is true for $i=b-a$.

Suppose that  $b-a+1\leq i \leq b-1$ with $a\geq 2$,   $x_{v_{b-j}}\leq x_{v_{b+2+j}}$ and $x_{w_{b-j}}\leq x_{w_{b+1+j}}$ for $b-a\leq j\leq i-1$.   By Lemma \ref{66}(ii), we have
\begin{align*}
&(\rho(T)+1)\left(x_{w_{b-i}}-x_{w_{b+1+i}}\right)-\rho(T)\left(x_{v_{b-(i-1)}}-x_{v_{b+2+(i-1)}}\right)\\
=&\sum_{j=b+2+i}^{2b+1}x_{v_{j}}+\sum_{j=b+1+i}^{2b}x_{w_{j}}-\left(\sum_{j=1}^{b-i}x_{v_{j}}+\sum_{j=1}^{b-i}x_{w_{j}}\right)\\
=&\sigma_{T}(T_2)-\sum_{j=0}^{i-1}x_{v_{b+2+j}}-\sum_{j=1}^{i-1}x_{w_{b+1+j}}-\left(\sigma_{T}(T_1)-\sum_{j=0}^{i-1}x_{v_{b-j}}-\sum_{j=b-a}^{i-1}x_{w_{b-j}}\right)\\
=&(\sigma_{T}(T_2)-\sigma_{T}(T_1))+\sum_{j=0}^{i-1}\left(x_{v_{b-j}}-x_{v_{b+2+j}}\right)\\
&+\sum_{j=b-a}^{i-1}\left(x_{w_{b-j}}-x_{w_{b+1+j}}\right)-\sum_{j=1}^{b-a-1}x_{w_{b+1+j}}\\
<& 0,
\end{align*}
so $(\rho(T)+1)(x_{w_{b-i}}-x_{w_{b+1+i}})<\rho(T)(x_{v_{b-(i-1)}}-x_{v_{b+2+(i-1)}})\le 0$.
Thus $x_{w_{b-i}}<x_{w_{b+1+i}}$.
On the other hand, by Lemma \ref{66}(ii) again, we have
\begin{align*}
&\rho(T)\left(x_{v_{b-i}}-x_{v_{b+2+i}}\right)-(\rho(T)+1)\left(x_{w_{b-i}}-x_{w_{b+1+i}}\right)\\
=&\sum_{j=b+2+i}^{2b+1}x_{v_{j}}+\sum_{j=b+2+i}^{2b}x_{w_{j}}-\left(\sum_{j=1}^{b-i}x_{v_{j}}+\sum_{j=1}^{b-i-1}x_{w_{j}}\right)\\
=&\sigma_{T}(T_2)-\sum_{j=0}^{i-1}x_{v_{b+2+j}}-\sum_{j=1}^{i}x_{w_{b+1+j}}-\left(\sigma_{T}(T_1)-\sum_{j=0}^{i-1}x_{v_{b-j}}-\sum_{j=b-a}^{i}x_{w_{p-j}}\right)\\
=&(\sigma_{T}(T_2)-\sigma_{T}(T_1))+\sum_{j=0}^{i-1}\left(x_{v_{b-j}}-x_{v_{b+2+j}}\right)\\
&-\sum_{j=1}^{b-a-1}x_{w_{b+1+j}}+\sum_{j=b-a}^{i}\left(x_{w_{b-j}}-x_{w_{b+1+j}}\right)\\
<& 0,
\end{align*}
so $\rho(T)(x_{v_{b-i}}-x_{v_{b+2+i}})<(\rho(T)+1)(x_{w_{b-i}}-x_{w_{b+1+i}})<0$, and
thus $x_{v_{b-i}}<x_{v_{b+2+i}}$. This proves Claim ii.

Combining Claims i and ii, we have $x_{v_{b-i}}\leq x_{v_{b+2+i}}$  for $0\leq i\leq b-1$, and $x_{w_{b-i}}\leq x_{w_{b+1+i}}$ for $b-a\leq i\leq b-1$ with $a\geq 1$.
However,
\begin{align*}
0 \geq &\sigma_{T}(T_2)-\sigma_{T}(T_1)\\
=&\sum_{i=0}^{b-1}x_{v_{b+2+i}}+\sum_{i=1}^{b-a-1}x_{w_{b+1+i}}+\sum_{i=b-a}^{b-1}x_{w_{b+1+i}}-\left(\sum_{i=0}^{b-1}x_{v_{b-i}}+\sum_{i=b-a}^{b-1}x_{w_{b-i}}\right)\\
=& \sum_{i=0}^{b-1}(x_{v_{b+2+i}}-x_{v_{b-i}})+\sum_{i=b-a}^{b-1}(x_{w_{b+1+i}}-x_{w_{b-i}})+\sum_{i=1}^{b-a-1}x_{w_{b+1+i}}\\
>& 0,
\end{align*}
which is a contradiction.  Therefore, $2(\sigma_{T}(T_2)-\sigma_{T}(T_1))+x_{w_{b+1}}>0$, and from (\ref{cc}), we have $x_{v_b}>x_{v_{b+2}}$, i.e., $x_{v_p}>x_{v_{p_1+1}}$.

\noindent \textbf{Case 2.} $b<p$.

Let $e=\{v_p,v_{p_1+1}\}$ if $\ell$ is even, and $e=\{v_p,v_{p_1}\}$ otherwise. As $b<p$, we have $e\in E(T)$.
Let $T_1$ be the component of $T-e$ containing $v_p$, and
$T_2$  the component of $T-v_{p_1}v_{p_1+1}$ containing $v_{p_1+1}$.
Note that $\sigma_{T}(T_1)=\sum_{j=1}^{p}x_{v_{j}}+\sum_{j=1}^{a}x_{w_{j}}$ and $\sigma_{T}(T_2)=\sum_{j=p_1+1}^{\ell}x_{v_{j}}+\sum_{j=l-b}^{\ell-1}x_{w_{j}}$.
In the following, we show that $\sigma_{T}(T_2)-\sigma_{T}(T_1)>0$. Suppose that this is not true, i.e., $\sigma_{T}(T_2)-\sigma_{T}(T_1) \leq 0$.

\noindent
{\bf Claim a.}  $x_{v_{p-i}}\leq x_{v_{p_1+1+i}}$  for $0\leq i\leq p-a-1$.

We show this
by induction on $i$.
For $i=0$, from the distance eigenequations of $T$ at $v_p$ and $v_{p_1+1}$, we get
\begin{equation}\label{c1}
\rho(T)(x_{v_p}-x_{v_{p_1+1}})=(p_1+1-p)(\sigma_{T}(T_2)-\sigma_{T}(T_1)).
\end{equation}
so $x_{v_p}\leq x_{v_{p_1+1}}$.
Suppose that $1\leq i\leq p-a-1$ and $x_{v_{p-j}}\leq x_{v_{p_1+1+j}}$ for $0\leq j\leq i-1$.
We consider $i\le p-b-1$ with $p-b\geq 2$, and $i\ge p-b$ separately.
In the former case,
we have by Lemma \ref{66}(i) that
\begin{align*}
&\rho(T)\left(x_{v_{p-i}}-x_{v_{p_1+1+i}}\right)-\rho(T)\left(x_{v_{p-(i-1)}}-x_{v_{p_1+1+(i-1)}}\right)\\
=&2\left(\sum_{j=p_1+1+i}^{\ell}x_{v_{j}}+\sum_{j=\ell-b}^{\ell-1}x_{w_{j}}\right)-2\left(\sum_{j=1}^{p-i}x_{v_{j}}+\sum_{j=\ell}^{a}x_{w_{j}}\right)\\
=&2\left(\sigma_{T}(T_2)-\sum_{j=0}^{i-1}x_{v_{p_1+1+j}}\right)-2\left(\sigma_{T}(T_1)-\sum_{j=0}^{i-1}x_{v_{p-j}}\right)\\
=&2\left(\sigma_{T}(T_2)-\sigma_{T}(T_1)\right)+2\sum_{j=0}^{i-1}\left(x_{v_{p-j}}-x_{v_{p_1+1+j}}\right)\\
\leq& 0,
\end{align*}
so $x_{v_{p-i}}-x_{v_{p_1+1+i}}\leq x_{v_{p-(i-1)}}-x_{v_{p_1+1+(i-1)}}\le 0$,
and thus $x_{v_{p-i}}\leq x_{v_{p_1+1+i}}$. In the latter case,
we have by Lemma \ref{66}(i) that
\begin{align*}
&\rho(T)\left(x_{v_{p-i}}-x_{v_{p_1+1+i}}\right)-\rho(T)\left(x_{v_{p-(i-1)}}-x_{v_{p_1+1+(i-1)}}\right)\\
=&2\left(\sum_{j=p_1+1+i}^{\ell}x_{v_{j}}+\sum_{j=p_1+1+i}^{\ell-1}x_{w_{j}}\right)-2\left(\sum_{j=1}^{p-i}x_{v_{j}}+\sum_{j=1}^{a}x_{w_{j}}\right)+x_{w_{p_1+i}}\\
=&2\left(\sigma_{T}(T_2)-\sum_{j=0}^{i-1}x_{v_{p_1+1+j}}-\sum_{j=\ell-b}^{p_1+i}x_{w_{j}}\right)-2\left(\sigma_{T}(T_1)-\sum_{j=0}^{i-1}x_{v_{p-j}}\right)+x_{w_{p_1+i}}\\
=&2(\sigma_{T}(T_2)-\sigma_{T}(T_1))+2\sum_{j=0}^{i-1}\left(x_{v_{p-j}}-x_{v_{p_1+1+j}}\right)-2\sum_{j=\ell-b}^{p_1+i}x_{w_{j}}+x_{w_{p_1+i}}\\
<& 0,
\end{align*}
so $x_{v_{p-i}}-x_{v_{p_1+1+i}}< x_{v_{p-(i-1)}}-x_{v_{p_1+1+(i-1)}}\le 0$,
and thus $x_{v_{p-i}}< x_{v_{p_1+1+i}}$. This proves Claim a.

\noindent
{\bf Claim b.} $x_{v_{p-i}}\leq x_{v_{p_1+1+i}}$
and $x_{w_{p-i}}\leq x_{w_{p_1+i}}$ for $p-a\leq i\leq p-1$ with $a\geq 1$

We show this by induction on $i$. By Claim a, $\sum_{j=0}^{p-a-1}\left(x_{v_{p-j}}-x_{v_{p_1+1+j}}\right)\leq 0$.

By Lemma \ref{66}(ii), we have
\begin{align*}
&(\rho(T)+1)\left(x_{w_{a}}-x_{w_{l-a}}\right)-\rho(T)\left(x_{v_{a+1}}-x_{v_{l-a}}\right)\\
=&\sum_{j=\ell+1-a}^{\ell}x_{v_{j}}+\sum_{j=\ell-a}^{\ell-1}x_{w_{j}}-\left(\sum_{j=1}^{a}x_{v_{j}}+\sum_{j=1}^{a}x_{w_{j}}\right)\\
=&\sigma_{T}(T_2)-\sum_{j=0}^{p-a-1}x_{v_{p_1+1+j}}-\sum_{j=p-b}^{p-a-1}x_{w_{p_1+j}}-\left(\sigma_{T}(T_1)-\sum_{j=0}^{p-a-1}x_{v_{p-j}}\right)\\
=&(\sigma_{T}(T_2)-\sigma_{T}(T_1))+\sum_{j=0}^{p-a-1}\left(x_{v_{p-j}}-x_{v_{p_1+1+j}}\right)-\sum_{j=p-b}^{p-a-1}x_{w_{p_1+i}}\\
<& 0,
\end{align*}
so $(\rho(T)+1)(x_{w_{a}}-x_{w_{\ell-a}})<\rho(T)(x_{v_{a+1}}-x_{v_{\ell-a}})<0$, and
thus  $x_{w_{a}}<x_{w_{\ell-a}}$.
On the other hand, by Lemma \ref{66}(ii) again, we have
\begin{align*}
&\rho(T)\left(x_{v_{a}}-x_{v_{\ell+1-a}}\right)-(\rho(T)+1)\left(x_{w_{a}}-x_{w_{\ell-a}}\right)\\
=&\sum_{j=\ell+1-a}^{\ell}x_{v_{j}}+\sum_{j=\ell+1-a}^{\ell-1}x_{w_{j}}-\left(\sum_{j=1}^{a}x_{v_{j}}+\sum_{j=1}^{a-1}x_{w_{j}}\right)\\
=&\sigma_{T}(T_2)-\sum_{j=0}^{p-a-1}x_{v_{p_1+1+j}}-\sum_{j=p-b}^{p-a}x_{w_{p_1+j}}-\left(\sigma_{T}(T_1)-\sum_{j=0}^{p-a-1}x_{v_{p-j}}-x_{w_a}\right)\\
=&(\sigma_{T}(T_2)-\sigma_{T}(T_1))+\sum_{j=0}^{p-a-1}\left(x_{v_{p-j}}-x_{v_{p_1+1+j}}\right)\\
&-\sum_{j=p-b}^{p-a-1}x_{w_{p_1+j}}+(x_{w_{a}}-x_{w_{l-a}})\\
<& 0,
\end{align*}
so $\rho(T)(x_{v_{a}}-x_{v_{\ell+1-a}})<(\rho(T)+1)(x_{w_{a}}-x_{w_{\ell-a}})<0$, and
thus $x_{v_{a}}<x_{v_{\ell+1-a}}$. So Claim b is true for $i=p-a$.

Suppose that  $p-a+1\leq i\leq p-1$ with $a\geq 2$,  $x_{v_{p-j}}\leq x_{v_{p_1+1+j}}$ and  $x_{w_{p-j}}\leq x_{w_{p_1+j}}$ for $p-a\leq j\leq i-1$.
By Lemma \ref{66}(ii), we have
\begin{align*}
&(\rho(T)+1)\left(x_{w_{p-i}}-x_{w_{p_1+i}}\right)-\rho(T)\left(x_{v_{p-(i-1)}}-x_{v_{p_1+1+(i-1)}}\right)\\
=&\sum_{j=p_1+1+i}^{\ell}x_{v_{j}}+\sum_{j=p_1+i}^{\ell-1}x_{w_{j}}-\left(\sum_{j=1}^{p-i}x_{v_{j}}+\sum_{j=1}^{p-i}x_{w_{j}}\right)\\
=&\sigma_{T}(T_2)-\sum_{j=0}^{i-1}x_{v_{p_1+1+j}}-\sum_{j=p-b}^{i-1}x_{w_{p_1+j}}-\left(\sigma_{T}(T_1)-\sum_{j=0}^{i-1}x_{v_{p-j}}-\sum_{j=p-a}^{i-1}x_{w_{p-j}}\right)\\
=&(\sigma_{T}(T_2)-\sigma_{T}(T_1))+\sum_{j=0}^{i-1}\left(x_{v_{p-j}}-x_{v_{p_1+1+j}}\right)\\
&-\sum_{j=p-b}^{p-a-1}x_{w_{p_1+j}}+\sum_{j=p-a}^{i-1}\left(x_{w_{p-j}}-x_{w_{p_1+j}}\right)\\
<& 0,
\end{align*}
so $(\rho(T)+1)(x_{w_{p-i}}-x_{w_{p_1+i}})<\rho(T)(x_{v_{p-(i-1)}}-x_{v_{p_1+1+(i-1)}})\le 0$,
and thus $x_{w_{p-i}}<x_{w_{p_1+i}}$.
On the other hand, by Lemma \ref{66}(ii) again, we have
\begin{align*}
&\rho(T)\left(x_{v_{p-i}}-x_{v_{p_1+1+i}}\right)-(\rho(T)+1)\left(x_{w_{p-i}}-x_{w_{p_1+i}}\right)\\
=&\sum_{j=p_1+1+i}^{\ell}x_{v_{j}}+\sum_{j=p_1+1+i}^{\ell-1}x_{w_{j}}-\left(\sum_{j=1}^{p-i}x_{v_{j}}+\sum_{j=1}^{p-i-1}x_{w_{j}}\right)\\
=&\sigma_{T}(T_2)-\sum_{j=0}^{i-1}x_{v_{p_1+1+j}}-\sum_{j=p-b}^{i}x_{w_{p_1+j}}-\left(\sigma_{T}(T_1)-\sum_{j=0}^{i-1}x_{v_{p-j}}-\sum_{j=p-a}^{i}x_{w_{p-j}}\right)\\
=&(\sigma_{T}(T_2)-\sigma_{T}(T_1))+\sum_{j=0}^{i-1}\left(x_{v_{p-j}}-x_{v_{p_1+1+j}}\right)\\
&-\sum_{j=p-b}^{p-a-1}x_{w_{p_1+j}}+\sum_{j=p-a}^{i}\left(x_{w_{p-j}}-x_{w_{p_1+j}}\right)\\
<& 0,
\end{align*}
so $\rho(T)(x_{v_{p-i}}-x_{v_{p_1+1+i}})<(\rho(T)+1)(x_{w_{p-i}}-x_{w_{p_1+i}})<0$, and
thus $x_{v_{p-i}}<x_{v_{p_1+1+i}}$. This proves Claim b.

Combining Claims a and b,  one has $x_{v_{p-i}}\leq x_{v_{p_1+1+i}}$  for $0\leq i\leq p-1$, and $x_{w_{p-i}}\leq x_{v_{p_1+i}}$ for $p-a\leq i\leq p-1$ with $a\geq 1$.
However,
\begin{align*}
0 \geq &\sigma_{T}(T_2)-\sigma_{T}(T_1)\\
=&\sum_{i=0}^{p-1}x_{v_{p_1+1+i}}+\sum_{i=p-b}^{p-a-1}x_{w_{p_1+i}}+\sum_{i=p-a}^{p-1}x_{w_{p_1+i}}-\left(\sum_{i=0}^{p-1}x_{v_{p-i}}+\sum_{i=p-a}^{p-1}x_{w_{p-i}}\right)\\
=& \sum_{i=0}^{p-1}(x_{v_{p_1+1+i}}-x_{v_{p-i}})+\sum_{i=p-a}^{p-1}(x_{w_{p_1+i}}-x_{w_{p-i}})+\sum_{i=p-b}^{p-a-1}x_{w_{p_1+i}}\\
>& 0,
\end{align*}
a contradiction.  Therefore, $\sigma_{T}(T_2)-\sigma_{T}(T_1)>0$, and from (\ref{c1}), we have $x_{v_p}>x_{v_{p_1+1}}$.

By combining the above two cases, we have $x_{v_p}>x_{v_{p_1+1}}$, so Item (i) follows.

In the following, we prove Item (ii). Recall  that $a\ge 1$.

Suppose that $x_{v_1}\le x_{v_{\ell}}$.

First we show that $x_{v_i}\le x_{v_{\ell+1-i}}$  and $x_{w_{i}}\leq x_{w_{\ell-i}}$ for $1\le i\le a$ by induction $i$.
For $i=1$, by Lemma \ref{66}(ii), we have
\[
(\rho(T)+1)(x_{v_1}-x_{v_\ell})=(\rho(T)+1)(x_{w_1}-x_{w_{\ell-1}}).
\]
As $x_{v_1}\le x_{v_{\ell}}$,  so $x_{w_1}\leq x_{w_{\ell-1}}$.
Suppose that $2\leq i\leq a$ with $a\geq 2$,   $x_{v_{j}}\leq x_{v_{\ell+1-j}}$  and $x_{w_{j}}\leq x_{w_{\ell-j}}$ for $1\leq j\leq i-1$.
By Lemma \ref{66}(ii), we have
\begin{align*}
&(\rho(T)+1)\left(x_{w_{i-1}}-x_{w_{\ell-(i-1)}}\right)-\rho(T)\left(x_{v_{i}}-x_{v_{l+1-i}}\right)\\
=&\sum_{j=\ell-(i-1)+1}^{\ell}x_{v_{j}}+\sum_{j=\ell-(i-1)}^{\ell-1}x_{w_{j}}-\left(\sum_{j=1}^{i-1}x_{v_{j}}+\sum_{j=1}^{i-1}x_{w_{j}}\right)\\
=&\sum_{j=1}^{i-1}\left(x_{v_{\ell+1-j}}-x_{v_{j}}\right)+\sum_{j=1}^{i-1}\left(x_{w_{\ell-j}}-x_{w_{j}}\right)\\
\geq& 0,
\end{align*}
so  $\rho(T)(x_{v_{i}}-x_{v_{\ell+1-i}})\leq (\rho(T)+1)(x_{w_{i-1}}-x_{w_{\ell-(i-1)}})\leq 0$, and thus $x_{v_{i}}\leq x_{v_{\ell+1-i}}$.
On the other hand, by Lemma \ref{66}(ii) again, we have
\begin{align*}
&\rho(T)\left(x_{v_{i}}-x_{v_{\ell+1-i}}\right)-(\rho(T)+1)\left(x_{w_{i}}-x_{w_{\ell-i}}\right)\\
=&\sum_{j=\ell+1+i}^{\ell}x_{v_{j}}+\sum_{j=\ell+1+i}^{\ell-1}x_{w_{j}}-\left(\sum_{j=1}^{i}x_{v_{j}}+\sum_{j=1}^{i-1}x_{w_{j}}\right)\\
=&\sum_{j=1}^{i}\left(x_{v_{\ell+1-j}}-x_{v_{j}}\right)+\sum_{j=1}^{i-1}\left(x_{w_{\ell-j}}-x_{w_{j}}\right)\\
\geq& 0,
\end{align*}
so $(\rho(T)+1)(x_{w_{i}}-x_{w_{\ell-i}})\leq\rho(T)(x_{v_{i}}-x_{v_{\ell+1-i}})\le 0$, and
thus $x_{w_{i}}\leq x_{w_{\ell-i}}$. Therefore,
$x_{v_{i}}\leq x_{v_{\ell+1-i}}$ and $x_{w_{i}}\leq x_{w_{\ell-i}}$
if $1\le i\le a$.

Next we show that $x_{v_i}\le x_{v_{\ell+1-i}}$ for $a+1\leq i\leq p$ by induction on $i$.
By above proof, $\sum_{j=1}^{a}\left(x_{v_{\ell+1-j}}-x_{v_{j}}\right)\geq 0$ and $\sum_{j=1}^{a}\left(x_{w_{\ell-j}}-x_{w_{j}}\right)\geq 0$.
For $i=a+1$, by Lemma \ref{66}(ii), we have
\begin{align*}
&(\rho(T)+1)\left(x_{w_{a}}-x_{w_{\ell-a}}\right)-\rho(T)\left(x_{v_{a+1}}-x_{v_{\ell-a}}\right)\\
=&\sum_{j=\ell-a+1}^{\ell}x_{v_{j}}+\sum_{j=\ell-a}^{\ell-1}x_{w_{j}}-\left(\sum_{j=1}^{a}x_{v_{j}}+\sum_{j=1}^{a}x_{w_{j}}\right)\\
=&\sum_{j=1}^{a}\left(x_{v_{\ell+1-j}}-x_{v_{j}}\right)+\sum_{j=1}^{a}\left(x_{w_{\ell-j}}-x_{w_{j}}\right)\\
\geq& 0,
\end{align*}
so $\rho(T)(x_{v_{a+1}}-x_{v_{\ell-a}})\leq (\rho(T)+1)(x_{w_{a}}-x_{w_{\ell-a}})\le 0$, and
thus $x_{v_{a+1}}\leq x_{v_{\ell-a}}$.

Suppose that $a+2\leq i\leq p$ and $x_{v_{j}}\leq x_{v_{\ell+1-j}}$ for $a+1\leq j\leq i-1$.
Let $E=0$ if $i=a+2$, and $E=2\sum_{j=\ell+1-(i-1)}^{\ell-a-1}x_{w_{j}}$  otherwise. Let $F=x_{w_{\ell+1-i}}$ if $i\leq b+1$ with $b<p$, or $i\leq b$ with $b=p$, and $F=0$ otherwise. Evidently, $E\geq 0$ and $F\geq 0$.
By Lemma \ref{66}(i), we have
\begin{align*}
&\rho(T)\left(x_{v_{i-1}}-x_{v_{\ell+1-(i-1)}}\right)-\rho(T)\left(x_{v_{i}}-x_{v_{\ell+1-i}}\right)\\
=&2\left(\sum_{j=\ell+1-(i-1)}^{\ell}x_{v_{j}}+\sum_{j=\ell+1-(i-1)}^{\ell-1}x_{w_{j}}\right)-2\left(\sum_{j=1}^{i-1}x_{v_{j}}+\sum_{j=1}^{a}x_{w_{j}}\right)+F\\
=&2\sum_{j=1}^{i-1}\left(x_{v_{\ell+1-j}}-x_{v_{j}}\right)+2\sum_{j=1}^{a}\left(x_{w_{\ell-j}}-x_{w_{j}}\right)+E+F\\
\geq & 0,
\end{align*}
so $x_{v_{i}}-x_{v_{\ell+1-i}}\leq x_{v_{i-1}}-x_{v_{\ell+1-(i-1)}}\le 0$,
and thus $x_{v_{i}}\leq x_{v_{\ell+1-i}}$.

Now it follows that $x_{v_{i}}\leq x_{v_{\ell+1-i}}$ for  $1\leq i\leq p$.
 In particular, we have $x_{v_p}\leq x_{v_{p_1+1}}$, contradicting to Item (i). Therefore, we have
  $x_{v_1}> x_{v_\ell}$. So, by similar arguments as above by induction on $i$, we  have $x_{v_{i}}> x_{v_{\ell+1-i}}$ and $x_{w_{i}}> x_{w_{\ell-i}}$ for $i=1,\dots,a$, and $x_{v_{a+1}}> x_{v_{\ell-a}}$. This is Item (ii).
\end{proof}

\begin{Lemma}  \label{ab}
 Let $T=T(n,a,b)$, where $a\geq 0$, $b\geq a+2$ and $2(a+b)<n-1$. Let $\ell=n-a-b$. Let $x=x(T)$. If $b< \frac{\ell}{2}$.  Then

(i)
 $x_{v_{i}}-x_{v_{\ell+1-i}}<x_{v_{i+1}}-x_{v_{\ell+1-(i+1)}}$ and $ x_{w_{i}}-x_{w_{\ell-i}}<x_{v_{i+1}}-x_{v_{\ell-i}}$ for  $i=1, \dots, a$ with $a\ge 1$;


(ii) $x_{v_{a+1+i}}-x_{v_{\ell-b+1-i}}>x_{v_{a+1+(i+1)}}-x_{v_{\ell-b+1-(i+1)}}>0$ for
$ i=1, \dots, \lfloor\frac{\ell-b-a-1}{2}\rfloor-1$;

(iii) $x_{v_{\ell-b+1}}<x_{v_{\ell-a}}$.
\end{Lemma}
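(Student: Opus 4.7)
My plan is to prove each of the three items by applying Lemma \ref{66} to a carefully chosen pair of edges and controlling the component-sum difference $\sigma_T(T_2)-\sigma_T(T_1)$ using Lemma \ref{ab-1}.

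\textbf{Part (i).} Apply Lemma \ref{66}(ii) with the pair of mirror sawtooth $3$-edges $e_1 = \{v_i,v_{i+1},w_i\}$ and $e_2 = \{v_{\ell-i},v_{\ell+1-i},w_{\ell-i}\}$, identifying $u_1=v_i$, $v_1=v_{i+1}$, $u_2=v_{\ell+1-i}$, $v_2=v_{\ell-i}$. The resulting components $T_1,T_2$ are mirror pieces, so Lemma \ref{ab-1}(ii) yields
\[
\sigma_T(T_1)-\sigma_T(T_2)=\sum_{j=1}^{i}(x_{v_j}-x_{v_{\ell+1-j}})+\sum_{j=1}^{i-1}(x_{w_j}-x_{w_{\ell-j}})>0.
\]
Adding the two equations of Lemma \ref{66}(ii) cancels the $(\rho(T)+1)$-term and yields immediately the first inequality of (i). The second inequality follows from the first equation of Lemma \ref{66}(ii) alone after isolating $(\rho(T)+2)(x_{w_i}-x_{w_{\ell-i}})$ and comparing it with $(\rho(T)+2)(x_{v_{i+1}}-x_{v_{\ell-i}})$, using $\sigma_T(T_2)-\sigma_T(T_1)<0$ together with $x_{v_{i+1}}>x_{v_{\ell-i}}$ (Lemma \ref{ab-1}(ii), with its final assertion handling $i=a$).

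\textbf{Part (ii).} Apply Lemma \ref{66}(i) with the bare $2$-edges $e_1 = \{v_{a+1+i},v_{a+2+i}\}$ and $e_2 = \{v_{\ell-b-i},v_{\ell-b+1-i}\}$; since $A_1=A_2=\emptyset$,
\[
\rho(T)\bigl[(x_{v_{a+1+i}}-x_{v_{\ell-b+1-i}})-(x_{v_{a+2+i}}-x_{v_{\ell-b-i}})\bigr]=2(\sigma_T(T_2)-\sigma_T(T_1)),
\]
so the strict decrease is equivalent to $\sigma_T(T_2)>\sigma_T(T_1)$. I would pair $v_1,\dots,v_{a+1}$ and $w_1,\dots,w_a$ of $T_1$ against their mirror images in $T_2$ via Lemma \ref{ab-1}(ii), pair the residual $v_{a+2},\dots,v_{a+1+i}$ of $T_1$ with the inner mirror vertices $v_{\ell-a-1},\dots,v_{\ell-a-i}$ in $T_2$ using Part (i), and observe that the $b-a$ deep right-saw pendants $w_{\ell-b},\dots,w_{\ell-a-1}$ together with the bare $v$-vertices $v_{\ell-b+1-i},\dots,v_{\ell-a-i-1}$ contribute a strict positive surplus to $T_2$. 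Positivity is then proved by reverse induction on $i$: at the innermost pair one reduces, via a short chain of further Lemma \ref{66}(i)-applications if necessary, to the pair $(v_p,v_{p_1+1})$ from Lemma \ref{ab-1}(i), and the just-established decrease propagates positivity to all smaller $i$.

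\textbf{Part (iii).} Apply Lemma \ref{66}(ii) with $e_1=\{v_{\ell-b},v_{\ell-b+1},w_{\ell-b}\}$ and $e_2=\{v_{\ell-a},v_{\ell-a+1},w_{\ell-a}\}$ (when $a\ge 1$; for $a=0$ a variant setup using the adjacent right-saw $3$-edge is needed). Choosing $u_1=v_{\ell-b}$, $v_1=v_{\ell-b+1}$, $u_2=v_{\ell-a+1}$, $v_2=v_{\ell-a}$ and rearranging the first equation of Lemma \ref{66}(ii) yields
\[
\rho(T)(x_{v_{\ell-b+1}}-x_{v_{\ell-a}})=(\rho(T)+2)(x_{w_{\ell-b}}-x_{w_{\ell-a}})+(\sigma_T(T_1)-\sigma_T(T_2)).
\]
A pairing argument analogous to Part (ii) with Lemma \ref{ab-1}(ii) shows $\sigma_T(T_1)>\sigma_T(T_2)$. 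The main obstacle and most delicate step is thus signing $x_{w_{\ell-a}}-x_{w_{\ell-b}}$ by enough to overturn the $\sigma_T(T_1)-\sigma_T(T_2)$ contribution; I expect to achieve this via a telescoping argument showing $x_{w_k}$ is strictly increasing in $k$ along the right sawtooth, using repeated applications of Lemma \ref{66}(ii) to consecutive pendant-bearing edges together with Part (i).
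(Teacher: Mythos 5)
Your treatment of Item (i) is sound and is essentially the paper's own argument: adding the two identities of Lemma \ref{66}(ii) for the mirror pair of pendant $3$-edges reproduces exactly the paper's application of Lemma \ref{66}(i), and the component-sum difference is signed by Lemma \ref{ab-1}(ii) just as in the paper. The problems are in Items (ii) and (iii). For (ii), the reduction of the strict decrease to $\sigma_T(T_2)>\sigma_T(T_1)$ is correct, but your pairing argument for that inequality does not close: the pairs $v_j$ versus $v_{\ell+1-j}$ ($j\le a+1$) and $w_j$ versus $w_{\ell-j}$ ($j\le a$) contribute \emph{negatively} to $\sigma_T(T_2)-\sigma_T(T_1)$ by Lemma \ref{ab-1}(ii), the pairs $v_{a+1+j}$ versus $v_{\ell-a-j}$ have unknown sign (Part (i) says nothing about indices beyond $a+1$), and the assertion that the unpaired right-hand vertices supply a ``strict positive surplus'' outweighing all of this is precisely the crux, not an observation. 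The paper instead splits at the \emph{center of the bare segment}, $t=\lfloor\frac{\ell-b+a+1}{2}\rfloor$, and extracts $S_1<S_2$ from the inequalities already established inside the proof of Lemma \ref{ab-1}; positivity then comes for free at the center pair $(v_t,v_{t_1+1})$ via the eigenequation, and propagates \emph{outward} by induction. Your reverse induction anchored at $(v_p,v_{p_1+1})$ cannot substitute for this: $p=\lfloor\frac{\ell}{2}\rfloor$ is the center of the whole path, which differs from $t$ by roughly $\frac{b-a}{2}$, and the two pairs are not related by any symmetric chain of Lemma \ref{66}(i) steps.

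For (iii), your identity $\rho(T)(x_{v_{\ell-b+1}}-x_{v_{\ell-a}})=(\rho(T)+2)(x_{w_{\ell-b}}-x_{w_{\ell-a}})+(\sigma_T(T_1)-\sigma_T(T_2))$ is correct, but it leaves you needing a \emph{quantitative} lower bound on $x_{w_{\ell-a}}-x_{w_{\ell-b}}$ large enough to overturn $\sigma_T(T_1)-\sigma_T(T_2)$, which is of the order of the total mass of the Perron vector since $T_1$ contains almost the whole hypertree. Mere monotonicity of $x_{w_k}$ along the right sawtooth is qualitative and cannot deliver this; you have correctly located the difficulty but not resolved it. The paper's proof avoids any such estimate by a global ``common sign'' argument: it splits near $m=\ell-\lfloor\frac{a+b}{2}\rfloor$, shows by induction that every difference $x_{v_{m-i}}-x_{v_{m+1+i}}$ and $x_{w_{m-i}}-x_{w_{m+i}}$ shares the sign of $R_2-R_1$, and then derives from Lemma \ref{ab-1}(ii) the inequality $R_2-R_1<-\sum(\text{those differences})$, which is consistent only with the common sign being negative; this forces $x_{v_{\ell-b+1}}<x_{v_{\ell-a}}$ without ever bounding any single entry. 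As written, your proposals for (ii) and (iii) each contain an unproved step that carries the real content of the lemma.
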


\begin{proof}
By Lemma \ref{66}(i) and Lemma \ref{ab-1}(ii), we have
\begin{align*}
\rho(T)\left(x_{v_{1}}-x_{v_{\ell}}\right)-\rho(T)\left(x_{v_{2}}-x_{v_{\ell-1}}\right)
=&2(x_{v_{\ell}}-x_{v_{1}})+(x_{w_{\ell-1}}-x_{w_1})\\
< & 0
\end{align*}
and
\begin{align*}
&\rho(T)\left(x_{v_{i}}-x_{v_{\ell+1-i}}\right)-\rho(T)\left(x_{v_{i+1}}-x_{v_{\ell+1-(i+1)}}\right)\\
=&2\left(\sum_{j=\ell+1-i}^{\ell}x_{v_{j}}+\sum_{j=\ell+1-i}^{\ell-1}x_{w_{j}}\right)
-2\left(\sum_{j=1}^{i}x_{v_{j}}+\sum_{j=1}^{i-1}x_{w_{j}}\right)+x_{w_{\ell-i}}-x_{w_{i}}\\
=&2\sum_{j=1}^{i}\left(x_{v_{\ell+1-j}}-x_{v_{j}}\right)
+2\sum_{j=1}^{i-1}\left(x_{w_{\ell-j}}-x_{w_{j}}\right)+x_{w_{\ell-i}}-x_{w_{i}}\\
< & 0
\end{align*}
for  $i=2,\dots, a$.
So $x_{v_{i}}-x_{v_{\ell+1-i}}< x_{v_{i+1}}-x_{v_{\ell+1-(i+1)}}$ for $i=1, \dots, a$. This proves the first part of Item (i).

For  $i=1, \dots, a$, by Lemma \ref{66}(ii) and Lemma \ref{ab-1}(ii), we have
\begin{align*}
&(\rho(T)+1)\left(x_{w_{i}}-x_{w_{\ell-i}}\right)-\rho(T)\left(x_{v_{i+1}}-x_{v_{\ell+1-(i+1)}}\right)\\
=&\sum_{j=\ell-i+1}^{\ell}x_{v_{j}}+\sum_{j=\ell-i}^{\ell-1}x_{w_{j}}-\left(\sum_{j=1}^{i}x_{v_{j}}+\sum_{j=1}^{i}x_{w_{j}}\right)\\
=&\sum_{j=1}^{i}\left(x_{v_{\ell+1-j}}-x_{v_{j}}\right)+\sum_{j=1}^{i}\left(x_{w_{\ell-j}}-x_{w_{j}}\right)\\
<& 0,
\end{align*}
so
\[
0<(\rho(T)+1)(x_{w_{i}}-x_{w_{\ell-i}})<\rho(T)(x_{v_{i+1}}-x_{v_{\ell+1-(i+1)}}),
\]
and thus $x_{w_{i}}-x_{w_{\ell-i}}<x_{v_{i+1}}-x_{v_{\ell-i}}$ for  $i=1, \dots, a$. This proves the second part of Item (i).

Next we prove Item (ii).

Let $t=\lfloor \frac{\ell-b+a+1}{2}\rfloor$ and $t_1=\lceil \frac{\ell-b+a+1}{2}\rceil$.
It suffices to prove that $x_{v_{t-i}}-x_{t_1+1+i}>x_{v_{t-(i-1)}}-x_{v_{t_1+1+(i-1)}}>0$ for
$0\leq i \leq t-(a+2)$. Let  $S_1=\sum_{j=1}^{t}x_{v_{j}}+\sum_{j=1}^{a}x_{w_{j}}$ and $S_2=\sum_{j=t_1+1}^{\ell}x_{v_{j}}+\sum_{j=\ell-b}^{\ell-1}x_{w_{j}}$.
First we show that $S_1<S_2$.

As in the proof of Lemma \ref{ab-1}, let $p=\lfloor \frac{\ell}{2}\rfloor$ and $p_1=\lceil \frac{\ell}{2}\rceil$. Recall that $b\leq p$.

If $\ell$ is odd and $b=p$ (i.e., $b=\frac{\ell-1}{2}$), then, by the arguments in  Case 1 of the proof of Item (i) in Lemma \ref{ab-1}, we have
\[
\sum_{j=1}^{b}x_{v_{j}}+\sum_{j=1}^{a}x_{w_{j}}< \sum_{j=b+2}^{2b+1}x_{v_{j}}+\sum_{j=b+2}^{2b}x_{w_{j}}+\frac{1}{2}x_{w_{b+1}}
<\sum_{j=b+1}^{2b+1}x_{v_{j}}+\sum_{j=b+1}^{2b}x_{w_{j}}.
\]
As $t_1\leq b$, we have $S_1<S_2$.
Suppose  that  $b<p$. Then,  by the arguments in Case 2 of the proof of Item (i) in Lemma \ref{ab-1}, we have
\[\sum_{j=1}^{p}x_{v_{j}}+\sum_{j=1}^{a}x_{w_{j}}<
\sum_{j=p_1+1}^{\ell}x_{v_{j}}+\sum_{j=\ell-b}^{\ell-1}x_{w_{j}}.
\]
As
$t\leq p$ and $t_1\leq p_1$, we have $S_1<S_2$.
It follows that $S_1<S_2$ in either case.

Next we prove that $x_{v_{t-i}}> x_{v_{t_1+1+i}}$  for $0\leq i\leq t-(a+2)$ by induction on $i$.

For $i=0$, from the distance eigenequations of $T$ at $v_t$ and $v_{t_1+1}$, we have
$\rho(T)(x_{v_t}- x_{v_{t_1+1}})=(t_1+1-t)(S_2-S_1)>0$, so $x_{v_t}>x_{v_{t_1+1}}$.

Suppose that $1\leq i\leq t-(a+2)$, and $x_{v_{t-j}}> x_{v_{t_1+1+j}}$ for $0\leq j\leq i-1$.
By Lemma \ref{66}(i), we have
\begin{align*}
&\rho(T)\left(x_{v_{t-i}}-x_{v_{t_1+1+i}}\right)-\rho(T)\left(x_{v_{t-(i-1)}}-x_{v_{t_1+1+(i-1)}}\right)\\
=&2\left(\sum_{j=t_1+1+i}^{\ell}x_{v_{j}}+\sum_{j=\ell-b}^{\ell-1}x_{w_{j}}\right)-2\left(\sum_{j=1}^{t-i}x_{v_{j}}+\sum_{j=1}^{a}x_{w_{j}}\right)\\
=&2\left(S_2-\sum_{j=0}^{i-1}x_{v_{t_1+1+j}}\right)-2\left(S_1-\sum_{j=0}^{i-1}x_{v_{t-j}}\right)\\
=&2\left(S_2-S_1\right)+2\sum_{j=0}^{i-1}\left(x_{v_{t-j}}-x_{v_{t_1+1+j}}\right)\\
>& 0,
\end{align*}
so $x_{v_{t-i}}-x_{v_{t_1+1+i}}> x_{v_{t-(i-1)}}-x_{v_{t_1+1+(i-1)}}>0$ for $1\leq i\leq t-(a+2)$. This proves Item (ii).

In the following, we  prove Item (iii).
By Lemma \ref{ab-1}(ii), we have $x_{v_i}>x_{v_{\ell+1-i}}$ and $x_{w_i}>x_{w_{\ell-i}}$ for $1\leq i\leq a$ with $a\ge 1$.
Thus
\[
\sum_{i=1}^{a}(x_{v_{i}}+x_{w_{i}})>\sum_{i=\ell-a+1}^{\ell}x_{v_{i}}+ \sum_{i=\ell-a}^{\ell-1}x_{w_{i}}.
\]
Let $m=\ell-\lfloor \frac{a+b}{2}\rfloor$. We consider two cases.

\noindent \textbf{Case 1.} $a+b$ is even, i.e., $b-a$ is even.

Let $R_1=\sum_{j=1}^{m}x_{v_{j}}+\sum_{j=1}^{a}x_{w_{j}}+\sum_{j=l-b}^{m-1}x_{w_{j}}$ and $R_2=\sum_{j=m+1}^{\ell}x_{v_{j}}+\sum_{j=m+1}^{\ell-1}x_{w_{j}}$.
We claim that $x_{v_{m-i}}-x_{v_{m+1-i}}$ and $R_2-R_1$ have common sign for $0\leq i\leq \frac{b-a}{2}-1$, and $x_{w_{m-i}}-x_{w_{m
+i}}$ and $R_2-R_1$ have common sign for $1\leq i\leq \frac{b-a}{2}-1$.

For $i=0$, from the distance eigenequations of $T$ at $v_m$ and $v_{m+1}$, we have
$\rho(T)(x_{v_m}-x_{v_{m+1}})=R_2-R_1$, so $x_{v_m}-x_{v_{m+1}}$ and $R_2-R_1$ have common sign.

For $i=1$ with $b-a\geq 4$,  by Lemma \ref{66}(ii), we have
\begin{align*}
&(\rho(T)+1)\left(x_{w_{m-1}}-x_{w_{m+1}}\right)-\rho(T)\left(x_{v_{m}}-x_{v_{m+1}}\right)\\
=&\sum_{j=m+2}^{\ell}x_{v_{j}}+\sum_{j=m+1}^{\ell-1}x_{w_{j}}
-\left(\sum_{j=1}^{m-1}x_{v_{j}}+\sum_{j=1}^{a}x_{w_{j}}+\sum_{j=\ell-b}^{m-1}x_{w_{j}}\right)\\
=&\left(R_2-x_{v_{m+1}}\right)-\left(R_1-x_{v_m}\right),
\end{align*}
so $(\rho(T)+1)(x_{w_{m-1}}-x_{w_{m+1}})=(R_2-R_1)+(x_{v_m}-x_{v_{m+1}})+\rho(T)\left(x_{v_{m}}-x_{v_{m+1}}\right)$,  and
thus $x_{w_{m-1}}-x_{w_{m+1}}$ and $R_2-R_1$ have common sign.

 By Lemma \ref{66}(ii) again, we have
\begin{align*}
&\rho(T)\left(x_{v_{m-1}}-x_{v_{m+2}}\right)-(\rho(T)+1)\left(x_{w_{m-1}}-x_{w_{m+1}}\right)\\
=&\sum_{j=m+2}^{\ell}x_{v_{j}}+\sum_{j=m+2}^{\ell-1}x_{w_{j}}-\left(\sum_{j=1}^{m-1}x_{v_{j}}+\sum_{j=1}^{a}x_{w_{j}}+\sum_{j=\ell-b}^{m-2}x_{w_{j}}\right)\\
=&\left(R_2-x_{v_{m+1}}-x_{w_{m+1}}\right)-\left(R_1-x_{v_m}-x_{w_{m-1}}\right),
\end{align*}
so
\begin{align*}
\rho(T)\left(x_{v_{m-1}}-x_{v_{m+2}}\right)=&(R_2-R_1)+(x_{v_m}-x_{v_{m+1}})
+(x_{w_{m-1}}-x_{w_{m+1}})\\
&+(\rho(T)+1)\left(x_{w_{m-1}}-x_{w_{m+1}}\right),
\end{align*}
and
thus $x_{v_{m-1}}-x_{v_{m+2}}$ and $R_2-R_1$ have common sign.

Suppose that $2\leq i \leq \frac{b-a}{2}-1$ with $b-a\geq 6$, and $x_{v_{m-j}}-x_{v_{m+1+j}}$ and $R_2-R_1$ have common sign for $0\leq j\leq i-1$, and $x_{w_{m-j}}-x_{w_{m+j}}$ and $R_2-R_1$ have common sign for $1\leq j\leq i-1$.
By Lemma \ref{66}(ii), we have
\begin{align*}
&(\rho(T)+1)\left(x_{w_{m-i}}-x_{w_{m+i}}\right)-\rho(T)\left(x_{v_{m-(i-1)}}-x_{v_{m+1+(i-1)}}\right)\\
=&\sum_{j=m+1+i}^{\ell}x_{v_{j}}+\sum_{j=m+i}^{\ell-1}x_{w_{j}}
-\left(\sum_{j=1}^{m-i}x_{v_{j}}+\sum_{j=1}^{a}x_{w_{j}}+\sum_{j=\ell-b}^{m-i}x_{w_{j}}\right)\\
=&\left(R_2-\sum_{j=0}^{i-1}x_{v_{m+1+j}}-\sum_{j=1}^{i-1}x_{w_{m+j}}\right)
-\left(R_1-\sum_{j=0}^{i-1}x_{v_{m-j}}-\sum_{j=1}^{i-1}x_{w_{m-j}}\right),
\end{align*}
so
\begin{align*}
(\rho(T)+1)\left(x_{w_{m-i}}-x_{w_{m+i}}\right)=&(R_2-R_1)+\sum_{j=0}^{i-1}\left(x_{v_{m-j}}
-x_{v_{m+1+j}}\right)\\
&+\sum_{j=1}^{i-1}(x_{w_{m-j}}-x_{w_{m+j}})\\
&+\rho(T)\left(x_{v_{m-(i-1)}}-x_{v_{m+1+(i-1)}}\right),
\end{align*}
and
thus $x_{w_{m-i}}-x_{w_{m+i}}$ and $R_2-R_1$ have common sign.

By Lemma \ref{66}(ii) again, we have
\begin{align*}
&\rho(T)\left(x_{v_{m-i}}-x_{v_{m+1+i}}\right)-(\rho(T)+1)\left(x_{w_{m-i}}-x_{w_{m+i}}\right)\\
=&\sum_{j=m+1+i}^{\ell}x_{v_{j}}+\sum_{j=m+1+i}^{\ell-1}x_{w_{j}}-\left(\sum_{j=1}^{m-i}x_{v_{j}}+\sum_{j=1}^{a}x_{w_{j}}+\sum_{j=\ell-b}^{m-i-1}x_{w_{j}}\right)\\
=&\left(R_2-\sum_{j=0}^{i-1}x_{v_{m+1+j}}-\sum_{j=1}^{i}x_{w_{m+j}}\right)-\left(R_1-\sum_{j=0}^{i-1}x_{v_{m-j}}-\sum_{j=1}^{i}x_{w_{m-j}}\right),
\end{align*}
so
\begin{align*}
\rho(T)\left(x_{v_{m-i}}-x_{v_{m+1+i}}\right)=&(R_2-R_1)
+\sum_{j=0}^{i-1}\left(x_{v_{m-j}}-x_{v_{m+1+j}}\right)\\
&+\sum_{j=1}^{i}(x_{w_{m-j}}-x_{w_{m+j}})+(\rho(T)+1)\left(x_{w_{m-i}}-x_{w_{m+i}}\right),
\end{align*}
and
thus $x_{v_{m-i}}-x_{v_{m+1+i}}$ and $R_2-R_1$ have common sign.

Now we have showed  that $x_{v_{m-i}}-x_{v_{m+1-i}}$ and $R_2-R_1$ have common sign for $0\leq i\leq \frac{b-a}{2}-1$, and $x_{w_{m-i}}-x_{w_{m
+i}}$ and $R_2-R_1$ have common sign for $1\leq i\leq \frac{b-a}{2}-1$, as claimed.
Note that
\begin{align*}
R_2-R_1<&\left(\sum_{i=\ell-a+1}^{\ell}x_{v_{i}}+ \sum_{i=\ell-a}^{\ell-1}x_{w_{i}}\right)+\left(\sum_{i=0}^{\frac{b-a}{2}-1}x_{v_{m+1+i}}+ \sum_{i=1}^{\frac{b-a}{2}-1}x_{w_{m+i}}\right) \\
&-\sum_{i=1}^{a}\left(x_{v_{i}}+x_{w_{i}}\right)-\left(\sum_{i=0}^{\frac{b-a}{2}-1}x_{v_{m-i}}+ \sum_{i=1}^{\frac{b-a}{2}-1}x_{w_{m-i}}\right)\\
<& -\left(\sum_{i=0}^{\frac{b-a}{2}-1}(x_{v_{m-i}}-x_{v_{m+1-i}})+\sum_{i=1}^{\frac{b-a}{2}-1}(x_{w_{m-i}}-x_{w_{m+i}})\right).
\end{align*}
This requires the above common sign to be $-$, and thus $x_{v_{m-\left(\frac{b-a}{2}-1\right)}}<x_{v_{m+1+\left(\frac{b-a}{2}-1\right)}}$, i.e., $x_{v_{\ell-b+1}}<x_{v_{\ell-a}}$, as desired.

\noindent \textbf{Case 2.} $a+b$ is odd, i.e., $b-a$ is odd.

Let $B_1=\sum_{j=1}^{m-1}x_{v_{j}}+\sum_{j=1}^{a}x_{w_{j}}+\sum_{j=\ell-b}^{m-1}x_{w_{j}}$ and $B_2=\sum_{j=m+1}^{\ell}x_{v_{j}}+\sum_{j=m}^{\ell-1}x_{w_{j}}$.
We claim that $x_{w_{m-i}}-x_{w_{m-1+i}}$  and $B_2-B_1$ have common sign, and $x_{v_{m-i}}-x_{v_{m+i}}$ and $B_2-B_1$ have common sign for $1\leq i\leq \frac{b-a-1}{2}$.

For $i=1$, from the distance eigenequations of $T$ at $w_{m-1}$ and $w_{m}$, we have
$(\rho(T)+1)(x_{w_{m-1}}-x_{w_{m}})=B_2-B_1$, so $x_{w_{m-1}}-x_{w_{m}}$ and $B_2-B_1$ have common sign.
By Lemma \ref{66}(ii), we have
\begin{align*}
&\rho(T)\left(x_{v_{m-1}}-x_{v_{m+1}}\right)-(\rho(T)+1)\left(x_{w_{m-1}}-x_{w_{m}}\right)\\
=&\sum_{j=m+1}^{\ell}x_{v_{j}}+\sum_{j=m+1}^{\ell-1}x_{w_{j}}-\left(\sum_{j=1}^{m-1}x_{v_{j}}+\sum_{j=1}^{a}x_{w_{j}}+\sum_{j=\ell-b}^{m-2}x_{w_{j}}\right)\\
=&\left(B_2-x_{w_{m}}\right)-\left(B_1-x_{w_{m-1}}\right),
\end{align*}
so
\begin{align*}
\rho(T)\left(x_{v_{m-1}}-x_{v_{m+1}}\right)=&(B_2-B_1)+(x_{w_{m-1}}-x_{w_{m}})\\
&+(\rho(T)+1)\left(x_{w_{m-1}}-x_{w_{m}}\right),
\end{align*}
and thus $x_{v_{m-1}}-x_{v_{m+1}}$
and $B_2-B_1$ have common sign.

Suppose that $2\leq i \leq \frac{b-a-1}{2}$ with $b-a\geq 5$, and $x_{v_{m-j}}-x_{v_{m+j}}$ and $B_2-B_1$ have common sign, and $x_{w_{m-j}}-x_{w_{m-1+j}}$ and $B_2-B_1$ have common sign for $1\leq j\leq i-1$.
By Lemma \ref{66}(ii), we have
\begin{align*}
&(\rho(T)+1)\left(x_{w_{m-i}}-x_{w_{m-1+i}}\right)-\rho(T)\left(x_{v_{m-(i-1)}}-x_{v_{m+(i-1)}}\right)\\
=&\sum_{j=m+i}^{\ell}x_{v_{j}}+\sum_{j=m-1+i}^{\ell-1}x_{w_{j}}-\left(\sum_{j=1}^{m-i}x_{v_{j}}+\sum_{j=1}^{a}x_{w_{j}}+\sum_{j=\ell-b}^{m-i}x_{w_{j}}\right)\\
=&\left(B_2-\sum_{j=1}^{i-1}x_{v_{m+j}}-\sum_{j=1}^{i-1}x_{w_{m-1+j}}\right)-\left(B_1-\sum_{j=1}^{i-1}x_{v_{m-j}}-\sum_{j=1}^{i-1}x_{w_{m-j}}\right),
\end{align*}
so
\begin{align*}
(\rho(T)+1)\left(x_{w_{m-i}}-x_{w_{m-1+i}}\right)=&
(B_2-B_1)+\sum_{j=1}^{i-1}\left(x_{v_{m-j}}-x_{v_{m+j}}\right)\\
&+\sum_{j=1}^{i-1}(x_{w_{m-j}}-x_{w_{m-1-j}})\\
&+\rho(T)\left(x_{v_{m-(i-1)}}-x_{v_{m+(i-1)}}\right),
\end{align*}
and thus $x_{w_{m-i}}-x_{w_{m-1+i}}$ and $B_2-B_1$ have common sign.

By Lemma \ref{66}(ii) again, we have
\begin{align*}
&\rho(T)\left(x_{v_{m-i}}-x_{v_{m+i}}\right)-(\rho(T)+1)\left(x_{w_{m-i}}-x_{w_{m-1+i}}\right)\\
=&\sum_{j=m+i}^{\ell}x_{v_{j}}+\sum_{j=m+i}^{\ell-1}x_{w_{j}}-\left(\sum_{j=1}^{m-i}x_{v_{j}}+\sum_{j=1}^{a}x_{w_{j}}+\sum_{j=\ell-b}^{m-i-1}x_{w_{j}}\right)\\
=&\left(B_2-\sum_{j=1}^{i-1}x_{v_{m+j}}-\sum_{j=1}^{i}x_{w_{m-1+j}}\right)-\left(B_1-\sum_{j=1}^{i-1}x_{v_{m-j}}-\sum_{j=1}^{i}x_{w_{m-j}}\right),
\end{align*}
so
\begin{align*}
\rho(T)\left(x_{v_{m-i}}-x_{v_{m+i}}\right)=&
(B_2-B_1)+\sum_{j=1}^{i-1}\left(x_{v_{m-j}}-x_{v_{m+j}}\right)\\
&+\sum_{j=1}^{i}(x_{w_{m-j}}-x_{w_{m-1+j}})\\
&+(\rho(T)+1)\left(x_{w_{m-i}}-x_{w_{m-1+i}}\right),
\end{align*}
and thus $x_{v_{m-i}}-x_{v_{m+i}}$ and $B_2-B_1$ have common sign.

Now we conclude  that $x_{v_{m-i}}-x_{v_{m+i}}$ and $B_2-B_1$ have common sign, and $x_{w_{m-i}}-x_{w_{m-1+i}}$ and $B_2-B_1$ have common sign for $1\leq i\leq \frac{b-a-1}{2}$, as claimed.
Note that
\begin{align*}
B_2-B_1<&\sum_{i=\ell-a+1}^{\ell}x_{v_{i}}+ \sum_{i=\ell-a}^{\ell-1}x_{w_{i}}+\sum_{i=1}^{\frac{b-a-1}{2}}x_{v_{m+i}}+ \sum_{i=1}^{\frac{b-a-1}{2}}x_{w_{m-1+i}}\\
&-\sum_{i=1}^{a}\left(x_{v_{i}}+ x_{w_{i}}\right)-\left(\sum_{i=1}^{\frac{b-a-1}{2}}x_{v_{m-i}}+ \sum_{i=1}^{\frac{b-a-1}{2}}x_{w_{m-i}}\right)\\
<& -\sum_{i=1}^{\frac{b-a-1}{2}}\left((x_{v_{m-i}}-x_{v_{m+i}})+(x_{w_{m-i}}-x_{w_{m-1+i}})\right).
\end{align*}
This requires the above common sign to be $-$, and thus $x_{v_{m-\frac{b-a-1}{2}}}<x_{v_{m+\frac{b-a-1}{2}}}$, i.e., $x_{v_{\ell-b+1}}<x_{v_{\ell-a}}$, as desired.
\end{proof}

Let $G$ be a connected hypergraph.
For $u\in V(G)$, the status (or transmission) of $u$ in $G$, denoted by $s_G(u)$, is defined to be the sum of distances from $u$ to all other vertices of $G$, i.e., the row sum of $D(G)$ indexed by vertex $u$, i.e., $s_G(u)=\sum_{v\in V(G)}d_{G}(u,v)$. Let $s(G)=\min\{s_G(u):u\in V(G)\}$. It is known that $\rho(G)\geq s(G)$, see ~\cite[p.~24, Theorem~1.1]{Mi}.

\begin{Lemma}  \label{sum}
 Let $T=T(n,a,b)$, where $a\geq 0$, $b\geq a+2$ and $2(a+b)<n-1$. Let $\ell=n-a-b$ and $r=\ell-b-a$. Let $x=x(T)$.
If $b< \frac{\ell}{2}$,  then $\rho(T)>(2a+1)(r-1)+\frac{r}{r-1}\sum_{i=1}^{\lfloor\frac{r-1}{2}\rfloor}(r-2i)$.
\end{Lemma}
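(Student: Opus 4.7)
The plan is to bound $\rho(T)$ from below via the Rayleigh quotient with a simple indicator test vector, and then verify by elementary algebra that the resulting lower bound already exceeds the stated expression.

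Define $y \in \mathbb{R}^{V(T)}$ by $y_{v_i}=1$ for $i=1,\dots,\ell$ and $y_u=0$ on every pendant vertex $u$. Since $v_i$ and $v_{i+1}$ share the edge $e_i$, one has $d_T(v_i,v_j)=|i-j|$ for all $1\leq i,j\leq \ell$, so
\[
y^{\top}D(T)y=2\sum_{1\leq i<j\leq \ell}(j-i)=\frac{\ell(\ell^2-1)}{3},\qquad y^{\top}y=\ell.
\]
The normalized vector $y/\sqrt{\ell}$ is a unit vector with nonnegative entries that is not equal to $x(T)$, because $x(T)$ is strictly positive everywhere while $y$ vanishes on the pendants. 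The sharp Rayleigh principle recalled in Section~2 therefore yields the strict inequality
\[
\rho(T) > \frac{\ell^2-1}{3}.
\]

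It then suffices to verify the purely algebraic inequality $\frac{\ell^2-1}{3} \geq (2a+1)(r-1)+\frac{r}{r-1}\sum_{i=1}^{\lfloor (r-1)/2\rfloor}(r-2i)$. A brief parity computation gives $\sum_{i=1}^{\lfloor (r-1)/2\rfloor}(r-2i)=\lfloor (r-1)^2/4\rfloor \leq (r-1)^2/4$, so the right-hand side is at most $(r-1)(r+8a+4)/4$. Using the hypothesis $b\geq a+2$, and hence $\ell \geq r+2a+2$, a straightforward expansion produces
\[
4(\ell^2-1)-3(r-1)(r+8a+4) \ \geq\  (r-4a)^2+7r+56a+24 \ >\ 0,
\]
which completes the proof.

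The only genuine step is this last algebraic verification; everything before it is a single Rayleigh quotient computation. The strictness in the Rayleigh step is essential and is delivered by the fact that the distance Perron vector $x(T)$ is strictly positive while the test vector $y$ vanishes on the pendant vertices, so the two cannot be proportional.
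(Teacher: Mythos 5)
Your proof is correct, and it takes a genuinely different route from the paper. The paper invokes the bound $\rho(G)\geq s(G)$ (minimum transmission), passes to the induced subhypertree $T'=T(\ell-b+3a+2,a,a+1)$, locates the vertex $v_{t+1}$ of minimum status in $T'$ by a discrete convexity argument on the row sums, and then bounds $s_{T'}(v_{t+1})$ from below by pairing off distances; this occupies about a page and requires a case split on the parity of $r$. You instead take the Rayleigh quotient of the indicator vector of the backbone $v_1,\dots,v_\ell$, which is legitimate since $d_T(v_i,v_j)=|i-j|$ on the backbone, giving $\rho(T)>\frac{\ell^2-1}{3}$ with strictness because the Perron vector is positive while your test vector vanishes on the (nonempty, as $a+b\geq 2$) set of pendant vertices. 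The remaining work is the closed form $\sum_{i=1}^{\lfloor (r-1)/2\rfloor}(r-2i)=\lfloor (r-1)^2/4\rfloor$ and the expansion $4(r+2a+2)^2-4-3(r-1)(r+8a+4)=(r-4a)^2+7r+56a+24>0$ together with $\ell\geq r+2a+2$ from $b\geq a+2$; I checked both and they are right. Your argument is shorter and more elementary, trading the paper's combinatorial identification of the minimum-status vertex for a crude but sufficient quadratic-in-$\ell$ lower bound; the price is that your bound says nothing structural about where the transmission is minimized, but nothing later in the paper needs that, since Lemma \ref{ab-2} only uses the numerical inequality.
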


\begin{proof}
As $b< \frac{\ell}{2}$ and $b\geq a+2$, we have  $r>2b-b-(b-2)=2$.
Note that
\[
\frac{r}{r-1}\sum_{i=1}^{\lfloor\frac{r-1}{2}\rfloor}(r-2i)
<\sum_{i=1}^{\lfloor\frac{r-1}{2}\rfloor}(r-2i)+\left\lfloor\frac{r-1}{2}\right\rfloor.
\]
This is because
\begin{align*}
&\frac{r}{r-1}\sum_{i=1}^{\lfloor\frac{r-1}{2}\rfloor}(r-2i)
-\left(\sum_{i=1}^{\lfloor\frac{r-1}{2}\rfloor}(r-2i)+\left\lfloor\frac{r-1}{2}\right\rfloor\right)\\
=&\frac{1}{r-1}\sum_{i=1}^{\lfloor\frac{r-1}{2}\rfloor}(r-2i)-\left\lfloor\frac{r-1}{2}\right\rfloor\\
=&\frac{\lfloor\frac{r-1}{2}\rfloor}{r-1}\cdot\frac{(r-2+r-2\lfloor\frac{r-1}{2}\rfloor)}{2}
-\left\lfloor\frac{r-1}{2}\right\rfloor\\
=&\left\lfloor\frac{r-1}{2}\right\rfloor\left(1-\frac{\lfloor\frac{r-1}{2}\rfloor}{r-1}-1\right)\\
=&-\left\lfloor\frac{r-1}{2}\right\rfloor \frac{\lfloor\frac{r-1}{2}\rfloor}{r-1}\\
<&0.
\end{align*}

Let $T'=T(\ell-b+3a+2,a,a+1)$. As $T'$ is a proper induced subgraph of $T$ and the distance between any two vertices in $T'$ remains unchanged, we have
$s(T)>s(T')$. Thus it is sufficient to prove that $s(T')\geq (2a+1)r+\sum_{i=1}^{\lfloor\frac{r-1}{2}\rfloor}(r-2i)+\lfloor\frac{r-1}{2}\rfloor$.
Obviously, $s_{T'}(w_i)\geq s_{T'}(v_i)$ for $1\leq i\leq a$ and $\ell-b\leq i\leq \ell-b+a$.

Let $t=\lfloor \frac{\ell-b+a+1}{2}\rfloor$ and $t_1=\lceil \frac{\ell-b+a+1}{2}\rceil$ . We claim that $s(T')=s_{T'}(v_{t+1})$.
We consider two cases.

\noindent \textbf{Case 1.} $r$ is even, i.e., $\ell-b+a$ is even.

Let $V_1=\{v_1, \dots, v_{t+1}, w_1, \dots, w_a\}$ and $V_2=V(T')\setminus V_1$. We have $|V_1|=t+1+a$ and $|V_2|=t+1+a$.
As $ s_{T'}(v_{t+1})-s_{T'}(v_{t+2})=\sum_{u\in V_1 }(d_{T'}(v_{t+1},u)-d_{T'}(v_{t+2},u))+\sum_{u\in V_2}(d_{T'}(v_{t+1},u)-d_{T'}(v_{t+2},u))=\sum_{u\in V_1 }(-1)+\sum_{u\in V_2} 1=-|V_1|+|V_2|=0$, we have $ s_{T'}(v_{t+1})=s_{T'}(v_{t+2})$.  By  similar argument as above, we have $s_{T'}(v_{i-1})>s_{T'}(v_{i})$ for $2\leq i\leq t+1$ and $s_{T'}(v_{i})<s_{T'}(v_{i+1})$ for $t+2\leq i\leq \ell-b+a+1$.
Thus $s(T')=s_{T'}(v_{t+1})$.

\noindent \textbf{Case 2.} $r$ is odd, i.e., $\ell-b+a$ is odd.

Let $V_1=\{v_1, \dots, v_{t}, w_1, \dots, w_a\}$ and $V_2=V(T')\setminus V_1$. We have $|V_1|=t+a$ and $|V_2|=t+a+1$.
As $ s_{T'}(v_{t})-s_{T'}(v_{t+1})=\sum_{u\in V_1 }(d_{T'}(v_{t},u)-d_{T'}(v_{t+1},u))+\sum_{u\in V_2}(d_{T'}(v_{t},u)-d_{T'}(v_{t+1},u))=\sum_{u\in V_1 }(-1)+\sum_{u\in V_2} 1=-|V_1|+|V_2|=1$, we have $ s_{T'}(v_{t})>s_{T'}(v_{t+1})$.  By  similar argument as above, we have $s_{T'}(v_{i-1})>s_{T'}(v_{i})$ for $2\leq i\leq t$ and $s_{T'}(v_{i})<s_{T'}(v_{i+1})$ for $t+1\leq i\leq \ell-b+a+1$.
Thus $s(T')=s_{T'}(v_{t+1})$.

Note that
\begin{align*}
\sum_{i=1}^{a+1}\left(d_{T'}(v_{t+1},v_{i})+d_{T'}(v_{t+1},v_{\ell-b+a+2-i})\right)
=&\sum_{i=1}^{a+1}(\ell-b+a+2-2i)\\
\geq & \sum_{i=1}^{a+1}r\\
=&  (a+1)r,
\end{align*}
and when $a\geq 1$,
\begin{align*}
\sum_{i=1}^{a}\left(d_{T’}(v_{t+1},w_{i})+d_{T'}(v_{t+1},w_{\ell-b+a+1-i})\right)
=&\sum_{i=1}^{a}(\ell-b+a+2-2i)\\
\geq & \sum_{i=1}^{a}(r+2)\\
>& ar
\end{align*}
and
\begin{align*}
&\sum_{i=0}^{t-(a+2)}\left(d_{T'}(v_{t+1},v_{t-i})+d_{T'}(v_{t+1},v_{t_1+1+i})\right)+d_{T'}(v_{t+1},w_{\ell-b})\\
=&\sum_{i=0}^{t-(a+2)}(t_1-t+1+2i)+(\ell-b-t)\\
=&\sum_{i=1}^{\lfloor\frac{r-1}{2}\rfloor}(r-2i)+\left\lceil\frac{r-1}{2}\right\rceil\\
\geq & \sum_{i=1}^{\lfloor\frac{r-1}{2}\rfloor}(r-2i)+\left\lfloor\frac{r-1}{2}\right\rfloor.
\end{align*}
Thus  $s_{T'}(v_{t+1})\geq (2a+1)r+\sum_{i=1}^{\lfloor\frac{r-1}{2}\rfloor}(r-2i)+\left\lfloor\frac{r-1}{2}\right\rfloor$, as desired.
\end{proof}

\section{Graft transformations that increase the distance spectral radius}

Let $G$ be a  hypergraph with $u,v\in V(G)$ and $e_1,\dots,e_r\in E(G)$ such that $u\notin e_i$ and $v\in e_i$ for $1\leq i\leq r$. Let $e'_i=(e_i\setminus \{v\})\cup \{u\} $ for $1\leq i\leq r$. Suppose that $e'_i\not\in E(G)$ for $1\leq i\leq r$. Let $G'$ be the hypergraph with $V(G')=V(G)$ and $E(G')=(E(G)\setminus \{e_1,\dots,e_r\})\cup \{e'_1,\dots,e'_r\}$. Then we say that $G'$ is obtained from $G$ by moving edges $e_1,\dots,e_r$ from $v$ to $u$.

\begin{Lemma}\cite{WZ2}   \label{3}
For $t\geq 3$, let $G$ be a hypergraph  consisting of $t$ connected subhypergraphs $G_1, \dots, G_t$  such that $|V(G_i)|\geq 2$ for $1\le i\le t$  and  $V(G_i)\cap V(G_j)=\{u\}$ for $1\le i<j\le t$.
Suppose that
$\emptyset \neq I\subseteq \{3,\dots, t\}$.  Let $v\in V(G_2)\setminus \{u\}$ and $G'$ be the hypergraph obtained from $G$ by moving  all the edges containing $u$ in $G_i$ for all $i\in I$ from $u$ to $v$. If $\sigma_{G}(G_1)\geq\sigma_{G}(G_2)$, then $\rho(G)<\rho(G')$.
\end{Lemma}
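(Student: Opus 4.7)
The plan is to apply Rayleigh's principle with the distance Perron vector $x := x(G)$ as a test vector for $D(G')$, which is legitimate because $V(G') = V(G)$. Writing $\Delta := x^{\top}\bigl(D(G')-D(G)\bigr)x$, Rayleigh yields $\rho(G') \ge x^{\top}D(G')x = \rho(G) + \Delta$, so it will suffice to verify $\Delta > 0$.

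Set $V_i := V(G_i) \setminus \{u\}$ for each $i$, and write $V_I := \bigcup_{i\in I} V_i$ and $V_J := \bigcup_{j \in \{3,\dots,t\}\setminus I} V_j$. By inspecting the transformation (each $G_i$ for $i\in I$ is reattached at $v$ in place of $u$, its internal structure intact), one checks that a pair $\{w,z\}$ has its distance altered only when exactly one endpoint, say $w$, lies in $V_I$, and in that case
\begin{align*}
d_{G'}(w,z) - d_G(w,z) =
\begin{cases}
d_G(u,v), & z \in V_1 \cup V_J \cup \{u\},\\
-d_G(u,v), & z = v,\\
d_G(v,z) - d_G(u,z), & z \in V_2 \setminus \{v\}.
\end{cases}
\end{align*}
The crucial observation is that each increment depends only on $z$, not on $w$, so $\sigma_G(V_I)$ factors out; after absorbing the $z=v$ term into the last case via $d_G(v,v) - d_G(u,v) = -d_G(u,v)$, one obtains
\begin{align*}
\frac{\Delta}{2\sigma_G(V_I)} = d_G(u,v)\bigl(\sigma_G(V_1) + \sigma_G(V_J) + x_u\bigr) + \sum_{z \in V_2}\bigl(d_G(v,z) - d_G(u,z)\bigr)x_z.
\end{align*}

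To close the argument, the triangle inequality gives $d_G(v,z) - d_G(u,z) \ge -d_G(u,v)$ for every $z$, so the last sum is at least $-d_G(u,v)\,\sigma_G(V_2)$. Substituting $\sigma_G(V_i) = \sigma_G(G_i) - x_u$ and using the hypothesis $\sigma_G(G_1) \ge \sigma_G(G_2)$ then yields
\begin{align*}
\frac{\Delta}{2\sigma_G(V_I)} \ge d_G(u,v)\bigl(\sigma_G(G_1) - \sigma_G(G_2) + \sigma_G(V_J) + x_u\bigr) \ge d_G(u,v)\,x_u > 0,
\end{align*}
so $\Delta > 0$ and hence $\rho(G) < \rho(G')$. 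The main obstacle is taming the $z \in V_2 \setminus \{v\}$ contribution: individually those increments $d_G(v,z) - d_G(u,z)$ can have either sign, depending on the internal geometry of $G_2$, and a direct evaluation is infeasible without further structural assumptions. The triangle inequality converts that potentially delicate sum into a uniform lower bound in $\sigma_G(V_2)$, after which the hypothesis $\sigma_G(G_1) \ge \sigma_G(G_2)$ is exactly what is needed to force the entire expression to be strictly positive.
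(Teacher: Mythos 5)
Your proof is correct: the distance changes you record for pairs with exactly one endpoint in $V_I$ are exactly right (all other pairwise distances are unchanged since the moved parts are attached at a single vertex), the factorization over $\sigma_G(V_I)$ is valid, and the triangle-inequality bound on the $V_2$ contribution combined with $\sigma_G(G_1)\ge\sigma_G(G_2)$ does give $\Delta\ge 2\sigma_G(V_I)\,d_G(u,v)\,x_u>0$. The paper only cites this lemma from [WZ2] without reproducing a proof, but your Rayleigh-quotient argument with $x(G)$ as a test vector for $D(G')$ is precisely the technique the authors use for the analogous estimates elsewhere in the paper (e.g.\ in Lemma 4.5 and in Claim 2 of the proof of Theorem 1.1), so this is essentially the intended argument.
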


Let $G$ be a  hypergraph with $e_1,e_2\in E(G)$ and $u_1, \dots, u_s\in V(G)$ such that $u_1, \dots, u_s\notin e_1$ and $u_1, \dots, u_s\in e_2$, where $|e_2|-s\ge 2$. Let $e'_1=e_1\cup \{u_1, \dots, u_s\}$ and $e'_2=e_2\setminus\{u_1, \dots, u_s\}$. Suppose that $e'_1, e'_2\not\in E(G)$. Let $G'$ be the hypergraph with $V(G')=V(G)$ and $E(G')=(E(G)\setminus \{e_1,e_2\})\cup \{e'_1,e'_2\}$. Then we say that $G'$ is obtained from $G$ by moving vertices $u_1, \dots, u_s$ from $e_2$ to $e_1$.

\begin{Lemma} \cite{WZ2} \label{edge3}
For $t\geq 3$, let $G$ be a hypergraph with an edge $e=\{w_1,\ldots,w_t\}$, such that $G-e$  consists of vertex-disjoint connected subhypergraphs $H_1, \dots, H_t$, each containing exactly one vertex of $e$.  Let $e\cap V(H_i)=\{w_i\}$ for $i=1,\dots, t$. Suppose  that $|V(H_i)|\geq 2$ for $i=1,2$.
Let $\emptyset \neq I\subseteq \{3,\ldots,t\}$. Let $e'\in E(H_2)$  and $G'$ be the hypergraph obtained from  $G$  by moving all the vertices in $\{w_i: i\in I\}$   from $e$ to $e'$.
If $\sigma_{G}(H_1)\geq\sigma_{G}(H_2)$, then $\rho(G)<\rho(G')$.
\end{Lemma}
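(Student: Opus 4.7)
The plan is to apply Rayleigh's principle to the distance Perron vector $x=x(G)$: since $x$ is a positive unit vector, $\rho(G')\ge x^{\top}D(G')x$, so it suffices to establish $x^{\top}D(G')x>x^{\top}D(G)x=\rho(G)$, i.e.,
\[
\sum_{\{u,v\}\subseteq V(G)}\bigl(d_{G'}(u,v)-d_G(u,v)\bigr)x_ux_v>0.
\]

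First I would classify the pairs whose distance changes. After the move, $e$ becomes $e\setminus\{w_i:i\in I\}$ and $e'$ becomes $e'\cup\{w_i:i\in I\}$, so in $G'$ every $w_i$ with $i\in I$ is adjacent to every vertex of $e'$ while $w_1,w_2$ and the other $w_j$ stay in the modified $e$. Writing $\alpha=1+d_{H_2}(w_2,e')$, a case analysis shows: \textbf{(a)} if $u\in V(H_i)$, $i\in I$, and $v\in V(H_k)$, $k\in\{1\}\cup(\{3,\ldots,t\}\setminus I)$, then $d_{G'}(u,v)-d_G(u,v)=\alpha$; \textbf{(b)} if $u\in V(H_i)$, $i\in I$, and $v\in V(H_2)$, then $d_{G'}(u,v)-d_G(u,v)=d_{H_2}(e',v)-d_{H_2}(w_2,v)$; \textbf{(c)} for every other pair the distance is unchanged, since any two $w_i,w_{i'}$ with $i,i'\in I$ remain in the common edge $e'\cup\{w_j:j\in I\}$. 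Writing $S_I=\sum_{i\in I}\sigma_G(H_i)$ and $S_A=\sigma_G(H_1)+\sum_{j\in\{3,\ldots,t\}\setminus I}\sigma_G(H_j)$, the goal reduces to
\[
\alpha S_A+\sum_{v\in V(H_2)}\bigl(d_{H_2}(e',v)-d_{H_2}(w_2,v)\bigr)x_v>0.
\]

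Next I would control the second sum by a triangle-inequality estimate. Choosing $v^{**}\in e'$ with $d_{H_2}(w_2,v^{**})=\alpha-1$ and $v^{*}\in e'$ with $d_{H_2}(v^{*},v)=d_{H_2}(e',v)$, and using $d_{H_2}(v^{**},v^{*})\le 1$,
\[
d_{H_2}(w_2,v)\le(\alpha-1)+1+d_{H_2}(e',v)=\alpha+d_{H_2}(e',v),
\]
so $\alpha+d_{H_2}(e',v)-d_{H_2}(w_2,v)\ge 0$ for every $v\in V(H_2)$, with slack $2\alpha-1$ at $v=w_2$ (because $d_{H_2}(e',w_2)=\alpha-1$ and $d_{H_2}(w_2,w_2)=0$). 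Summing against the positive weights $x_v$ and using $\sigma_G(H_1)\ge\sigma_G(H_2)$,
\[
\alpha S_A+\sum_{v}\bigl(d_{H_2}(e',v)-d_{H_2}(w_2,v)\bigr)x_v\ge\alpha(S_A-\sigma_G(H_2))+(2\alpha-1)x_{w_2}\ge(2\alpha-1)x_{w_2}>0,
\]
so multiplying by $2S_I>0$ gives $x^{\top}D(G')x>x^{\top}D(G)x$, hence $\rho(G')>\rho(G)$.

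The main obstacle is the triangle inequality step: concatenating two loose paths in a general hypergraph may fail to yield a loose path when non-adjacent edges overlap, so one must check that every such concatenation can be trimmed to a genuine loose path of no greater length. In the hypertree setting of the main theorem this is immediate, and in the general hypergraph context of the lemma it is a standard but careful observation about loose-path distances. With it in hand, the clean bookkeeping above — $\alpha$-gains on the "away" pairs in case~\textbf{(a)}, a bounded loss on $V(H_2)$ from case~\textbf{(b)}, and the strict slack at $v=w_2$ combined with the hypothesis $\sigma_G(H_1)\ge\sigma_G(H_2)$ — finishes the proof.
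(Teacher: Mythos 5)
This lemma is imported from \cite{WZ2} and the paper gives no proof of it, so there is nothing in-text to compare against; judged on its own, your argument is correct and uses exactly the Rayleigh-quotient technique that the paper applies in its other graft transformations (e.g., in Claim 2 of the proof of Theorem \ref{t0} and in Lemma \ref{ab-2}): bound $\rho(G')-\rho(G)$ below by $x^{\top}(D(G')-D(G))x$ with $x=x(G)$ and do the distance bookkeeping. Your case analysis of which distances change is right (the cut-vertex structure forces every path out of $H_i$, $i\in I$, to enter $H_2$ through $e'$ in $G'$), the factorization of the change as $2S_I\bigl(\alpha S_A+\sum_{v\in V(H_2)}(d_{H_2}(e',v)-d_{H_2}(w_2,v))x_v\bigr)$ is correct, and the triangle-inequality step you flag as the main obstacle is unproblematic here because the paper identifies $d_G$ with the ordinary graph distance in $O_G$, where shortest walks can be concatenated and shortcut freely; the strict positivity then comes from the slack $2\alpha-1$ at $v=w_2$ together with $S_A\ge\sigma_G(H_1)\ge\sigma_G(H_2)$, as you say.
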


\begin{Lemma}  \label{entry}
Suppose that  $v,w$ be two non-adjacent neighbors of vertex $u$ in a connected hypergraph $G$. Let $x=x(G)$. Then $x_w+x_u-x_v> 0$.
\end {Lemma}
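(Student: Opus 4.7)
The plan is to bound $x_v$ from above via the distance eigenequation at $v$, applying the triangle inequality $d_G(v,z) \leq 1 + d_G(u,z)$ that is available because $v$ and $u$ lie in a common edge, and then to show that this upper bound is strictly smaller than $x_u + x_w$ by invoking the distance eigenequation at $w$.

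First I record the three relevant distances. Both $v$ and $w$ are neighbors of $u$, so $d_G(u,v) = d_G(u,w) = 1$. Since $v$ and $w$ are not adjacent in $G$, any shortest $v$--$w$ path has length at least $2$, but the path through $u$ shows it is at most $2$, so $d_G(v,w) = 2$. Now write the eigenequation at $v$ with the three special terms $z = u, v, w$ isolated, so that
\[
\rho(G)\, x_v = x_u + 2 x_w + \sum_{z \notin \{u,v,w\}} d_G(v,z)\, x_z,
\]
and bound the sum by $\sum_{z \notin \{u,v,w\}}(1 + d_G(u,z))\, x_z$ using the triangle inequality. Substituting the eigenequation at $u$ to rewrite $\sum_{z \notin \{u,v,w\}} d_G(u,z)\, x_z = \rho(G)\, x_u - x_v - x_w$ and setting $S = \sum_{z \in V(G)} x_z$, everything should collapse to
\[
(\rho(G) + 2)\, x_v \leq \rho(G)\, x_u + S.
\]

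The complementary estimate comes from the eigenequation at $w$: since $d_G(w,z) \geq 1$ for every $z \neq w$, one obtains $\rho(G)\, x_w \geq S - x_w$. This inequality is in fact strict, because $d_G(w,v) = 2 > 1$ contributes an extra $x_v$ on the right-hand side. Hence $(\rho(G)+1)\, x_w > S$. Chaining the two bounds,
\[
(\rho(G)+2)\, x_v \leq \rho(G)\, x_u + S < \rho(G)\, x_u + (\rho(G)+1)\, x_w < (\rho(G)+2)(x_u + x_w),
\]
where the last step is just the trivial inequality $0 < 2 x_u + x_w$. Dividing by $\rho(G) + 2 > 0$ yields $x_v < x_u + x_w$, i.e., $x_w + x_u - x_v > 0$.

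The only delicate point is keeping the inequality strict throughout the chain, and this strictness can be traced back to the single fact that non-adjacency of $v$ and $w$ forces $d_G(w,v) = 2$ rather than $1$, which opens the gap in the lower bound on $\rho(G)\, x_w$.
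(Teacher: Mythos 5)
Your proof is correct and rests on exactly the same ingredients as the paper's: the distance eigenequations at $u$, $v$, $w$ together with the two distance bounds $d_G(v,z)\le d_G(u,z)+1$ and $d_G(w,z)\ge 1$ (the paper packages these as $d_G(w,z)+d_G(u,z)-d_G(v,z)\ge 0$ and derives $(\rho(G)+1)(x_w+x_u-x_v)\ge x_u+2x_v>0$ from one linear combination). Chaining the two estimates through the total sum $S$ instead of forming a single linear combination is only a difference in bookkeeping, so this is essentially the same argument.
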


\begin{proof}
Let $V_1=V(G)\setminus \{u,v,w\}$.
For $z\in V_1$, one has
$d_{G}(w,z)\geq1$ and  $d_{G}(u,z)-d_{G}(v,z)\geq-d_G(u,v)=-1$, so $d_{G}(w,z)+d_{G}(u,z)-d_{G}(v,z)\geq0$.
From the distance eigenequations of $G$ at $w$, $u$ and $v$, we have
\[
\rho(G)x_{w}=x_{u}+2x_{v}+\sum_{z\in V_1}d_{G}(w,z)x_z,
\]
\[
\rho(G)x_{u}=x_{w}+x_{v}+\sum_{z\in V_1}d_{G}(u,z)x_z,
\]
and
\[
\rho(G)x_{v}=2x_{w}+x_{u}+\sum_{z\in V_1}d_{G}(v,z)x_z.
\]
Thus
\begin{align*}
&\rho(G)(x_{w}+x_{u}-x_{v})\\
=& -x_{w}+3x_{v}+\sum_{z\in V_1}(d_{G}(w,z)+d_{G}(u,z)-d_{G}(v,z))x_z\\
\geq& -x_{w}+3x_{v},
\end{align*}
which implies $(\rho(G)+1)(x_{w}+x_{u}-x_{v})\geq x_{u}+ 2x_{v}>0$. So it follows that $x_{w}+x_{u}-x_{v}>0$.
\end{proof}

\begin{Lemma}  \label{ab-2}
 Suppose that  $b\geq a+2$ and $2(a+b)<n-1$.  Then $\rho (T(n,a+1,b-1))>\rho (T(n,a,b))$.
\end{Lemma}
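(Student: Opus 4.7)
The plan is to compare $\rho(T)$ and $\rho(T')$ via a Rayleigh-quotient argument, where $T := T(n,a,b)$, $T' := T(n,a+1,b-1)$, and $\ell := n-a-b$. Use the natural bijection $\phi\colon V(T) \to V(T')$ that fixes each $v_i$ and each common $w_k$ and sends the single relocated pendant by $\phi(w_{\ell-b}) = w_{a+1}$. Writing $x = x(T)$ and defining $y$ on $V(T')$ by $y_{\phi(u)} = x_u$ (so $\|y\| = 1$), Rayleigh's principle gives
\[
\rho(T') - \rho(T) \geq y^{\top} D(T') y - x^{\top} D(T) x = 2\, x_{w_{\ell-b}} \sum_{u \neq w_{\ell-b}} \Delta(u)\, x_u,
\]
where $\Delta(u) := d_{T'}(\phi(u), w_{a+1}) - d_T(u, w_{\ell-b})$, since only pairs containing the moved vertex have altered distance. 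A direct computation with hypertree distances (writing $r := \ell - a - b$) gives $\Delta(u) = -(r-1)$ on the left block $L := \{v_1,\dots,v_{a+1}\} \cup \{w_1,\dots,w_a\}$, $\Delta(u) = +(r-1)$ on the right block $R := \{v_{\ell-b+1},\dots,v_\ell\} \cup \{w_{\ell-b+1},\dots,w_{\ell-1}\}$, and $\Delta(v_{a+1+s}) = 2s - r$ on the middle vertices $s = 1,\dots, r-1$, so the task reduces to establishing
\[
F := (r-1)\bigl[\sigma_T(R) - \sigma_T(L)\bigr] + \sum_{s=1}^{r-1}(2s-r)\, x_{v_{a+1+s}} > 0.
\]

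I would split according to the dichotomy governing the supporting lemmas. In the easier case $b \geq \ell/2$, I would bypass the Rayleigh computation and apply the edge-moving transformation of Lemma \ref{edge3} to the edge $e = \{v_{\ell-b}, v_{\ell-b+1}, w_{\ell-b}\}$: its removal yields three components, namely the right subtree $T_2$, the left subtree $T_1$, and the singleton $\{w_{\ell-b}\}$. Moving $w_{\ell-b}$ from $e$ into the 2-edge $\{v_{a+1}, v_{a+2}\}$ of the left subtree produces exactly $T'$, and Lemma \ref{ab1} supplies $\sigma_T(T_2) > \sigma_T(T_1)$, which is precisely the hypothesis (the far subtree being heavier) required by Lemma \ref{edge3}; this gives $\rho(T) < \rho(T')$.

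In the harder case $b < \ell/2$, I would prove $F > 0$ via a two-pronged estimate. Pairing index $s$ with $r - s$ collapses the middle sum to $-\sum_{s=1}^{\lfloor(r-1)/2\rfloor} (r - 2s)\bigl(x_{v_{a+1+s}} - x_{v_{\ell-b+1-s}}\bigr)$, which by Lemma \ref{ab}(ii) is a controlled negative quantity (each bracketed difference is strictly positive and monotonically decreasing in $s$). For the positive term $(r-1)[\sigma_T(R) - \sigma_T(L)]$, Lemma \ref{ab-1}(ii) matches $L$ bijectively with its mirror block $\{v_{\ell-a},\dots,v_\ell\} \cup \{w_{\ell-a},\dots,w_{\ell-1}\} \subset R$ term by term with $L$-entries strictly larger, so $\sigma_T(R) - \sigma_T(L)$ is, up to a controlled mirror-pair defect, the ``surplus'' mass $\sum_{j=\ell-b+1}^{\ell-a-1} x_{v_j} + \sum_{k=\ell-b+1}^{\ell-a-1} x_{w_k}$; Lemma \ref{ab}(iii) (giving $x_{v_{\ell-b+1}} < x_{v_{\ell-a}}$) anchors these surplus entries against a reference entry, and the lower bound on $\rho(T)$ from Lemma \ref{sum}, combined with the distance eigenequation at a carefully chosen vertex (for instance $w_{\ell-b}$ or $v_{\ell-b+1}$), is used to convert that status-type bound into a quantitative entry inequality strong enough to certify $F > 0$.

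The main obstacle is exactly this last orchestration: the surplus contribution from the middle-right block of $R$ must be shown to strictly dominate both the negative middle sum and the unfavorable mirror-pair losses, and doing so demands Lemmas \ref{ab-1}, \ref{ab}, and \ref{sum} be applied together with care. This is why those preparatory lemmas were proved in the form and generality seen in Section 3.
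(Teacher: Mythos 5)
Your setup is sound and matches the paper's: the same Rayleigh-quotient reduction to the quantity $F$ (the paper's $x_{w_{\ell-b}}W$ with $W=(r-1)(\sigma_T(A)-\sigma_T(B))+C$), the same case split on $b\ge \ell/2$ versus $b<\ell/2$, and your Case 1 (Lemma \ref{edge3} fed by Lemma \ref{ab1}) is complete and is exactly the paper's argument. The problem is Case 2, which is the entire point of the lemma: you describe \emph{what} must be dominated by \emph{what} and name the relevant lemmas, but you do not give the argument, and you concede as much in your final sentence. A direct ``surplus beats losses'' estimate of the kind you sketch is not available from Lemmas \ref{ab-1} and \ref{ab} alone: those lemmas give only the signs and monotonicity of the mirror-pair differences, with no quantitative control on their magnitudes relative to the surplus mass $\sum_{i=\ell-b+1}^{\ell-a-1}(x_{v_i}+x_{w_i})$, so the two negative contributions cannot be absorbed term by term.

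The paper closes this gap by an indirect mechanism you have not identified. First it proves the intermediate inequality $x_{v_{a+1}}-x_{v_{\ell-b+1}}>x_{v_{a+2}}-x_{v_{\ell-b}}>0$ by contradiction: assuming the reverse, Lemma \ref{66}(i) forces $\sigma_T(A)-\sigma_T(B)<0$ and hence $W<0$. Then, writing the eigenequation difference at the pair $v_{a+2},v_{\ell-b}$ (and later at $v_{a+1},v_{\ell-b+1}$), bounding $x_{w_{\ell-b}}$ by the surplus mass via Lemma \ref{entry} (a lemma your proposal never invokes), and replacing every mirror difference and every middle-sum difference by the single quantity $x_{v_{a+1}}-x_{v_{\ell-b+1}}$ (respectively $x_{v_{a+2}}-x_{v_{\ell-b}}$) using Lemma \ref{ab}(i)--(iii), one arrives at an inequality of the shape
\[
\tfrac{2r-1}{r-1}\,W>\Bigl(\rho(T)-\bigl((2a+1)(r-1)+\tfrac{r}{r-1}\textstyle\sum_{i=1}^{\lfloor\frac{r-1}{2}\rfloor}(r-2i)\bigr)\Bigr)\bigl(x_{v_{a+1}}-x_{v_{\ell-b+1}}\bigr),
\]
whereupon Lemma \ref{sum} makes the right-hand side strictly positive and yields $W>0$. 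It is precisely this conversion --- eigenequation plus the status lower bound on $\rho(T)$, applied twice, once inside a proof by contradiction --- that constitutes the proof, and it is missing from your proposal.
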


\begin{proof}
Let $T=T(n,a,b)$ and $x=x(T)$. Let $\ell=n-a-b$ and $r=\ell-b-a$.

Let $T'$ be the hypergraph obtained from $T$ by moving vertex $w_{\ell-b}$ from $e_{\ell-b}$ to $e_{a+1}$. Obviously, $T'\cong T(n,a+1,b-1)$.

\noindent \textbf{Case 1.} $b\geq \frac{\ell}{2}$.

 Let $e$ be the edge containing both $v_{\ell-b}$ and $v_{\ell-b+1}$.
 Let $T_1$ be the component of $T-e$ containing $v_{\ell-b}$.
 Let $T_2$ be the component of $T-e$ containing $v_{\ell-b+1}$. By Lemma \ref{ab1}, we have  $\sigma_{T}(T_1)<\sigma_{T}(T_2)$. Then by Lemma \ref{edge3}, we have  $\rho(T')>\rho(T)$.

\noindent \textbf{Case 2.} $b< \frac{\ell}{2}$.

Let $A=\{v_{\ell-b+1}, \dots, v_{\ell}\}\cup \{w_{\ell-b+1}, \dots, w_{\ell-1}\}$, and $B=\{v_{1}, \dots, v_{a+1} \}\cup \{w_{1}, \dots, w_a\}$.
As we pass from   $T$ to $T'$, the distance between $w_{l-b}$  and a vertex of $A$  is increased by $r-1$,
the distance between $w_{\ell-b}$  and a vertex of $B$  is decreased by $r-1$,
and  the distance between $w_{\ell-b}$  and  $v_{\ell-b+1-i}$  is increased by $r-2i$ for $ i=1, \dots, r-1$, and the distance between any other vertex pair remains unchanged.  So
\[
\frac{1}{2}(\rho(T')-\rho(T))\geq\frac{1}{2}x^{\top}(D(T')-D(T))x=x_{w_{\ell-b}}W,
\]
where
$W=(r-1)(\sigma_{T}(A)-\sigma_{T}(B))+C$,
\[
C=\sum_{i=1}^{r-1}(r-2i)x_{v_{\ell-b+1-i}}=
\sum_{i=1}^{\lfloor\frac{r-1}{2}\rfloor}(r-2i)(x_{v_{\ell-b+1-i}}-x_{v_{a+1+i}}),
\]
and
\begin{align*}
\sigma_{T}(A)-\sigma_{T}(B)=&\sum_{i=\ell-b+1}^{\ell}x_{v_i}+\sum_{i=\ell-b+1}^{\ell-1}x_{w_i}-\sum_{i=1}^{a+1}x_{v_i}-\sum_{i=1}^{a}x_{w_i}\\
=&\sum_{i=1}^{a+1}(x_{v_{\ell+1-i}}-x_{v_i})+\sum_{i=1}^{a}(x_{w_{\ell-i}}-x_{w_i})+\sum_{i=\ell-b+1}^{\ell-a-1}(x_{v_{i}}+x_{w_{i}}).
\end{align*}

Now  we prove that $x_{v_{a+1}}-x_{v_{\ell-b+1}}> x_{v_{a+2}}-x_{v_{\ell-b}}$.
Suppose to the contrary that $x_{v_{a+1}}-x_{v_{\ell-b+1}}\leq x_{v_{a+2}}-x_{v_{\ell-b}}$.
Then, by Lemma \ref{66}(i), we have
\begin{align*}
2(\sigma_{T}(A)-\sigma_{T}(B))+x_{w_{\ell-b}}=&
\rho(T)(x_{v_{a+1}}-x_{v_{\ell-b+1}})-\rho(T)(x_{v_{a+2}}-x_{v_{\ell-b}})\\
\le & 0.
\end{align*}
So $\sigma_{T}(A)-\sigma_{T}(B)<0$, and $C<0$ by Lemma \ref{ab}(ii). It follows that $W<0$.

By Lemma \ref{entry}, we have $x_{w_{\ell-b}}\leq x_{v_{\ell-b+1}}+x_{w_{\ell-b+1}}$, so $x_{w_{\ell-b}}\leq \sum_{i=\ell-b+1}^{\ell-a-1}(x_{v_{i}}+x_{w_{i}})$.
So, from the distance  eigenequations of $T$ at $v_{a+2}$ and $v_{\ell-b}$,
 we have
\begin{align*}
&\rho(T)(x_{v_{a+2}}-x_{v_{\ell-b}})\\
=&(r-2)(\sigma_{T}(A)-\sigma_{T}(B))+C+(r-2)x_{w_{\ell-b}}\\
<&(r-2)(\sigma_{T}(A)-\sigma_{T}(B))+C+(r-1)\sum_{i=\ell-b+1}^{\ell-a-1}(x_{v_{i}}+x_{w_{i}})\\
=& (r-2)(\sigma_{T}(A)-\sigma_{T}(B))+C+(r-1)(\sigma_{T}(A)-\sigma_{T}(B))\\
&-(r-1)\left(\sum_{i=1}^{a+1}(x_{v_{\ell+1-i}}-x_{v_i})+\sum_{i=1}^{a}(x_{w_{\ell-i}}-x_{w_i})\right)\\
=&\frac{r-2}{r-1}W-\frac{r-2}{r-1}C+W\\
&+(r-1)\left(\sum_{i=1}^{a+1}(x_{v_i}-x_{v_{\ell+1-i}})+\sum_{i=1}^{a}(x_{w_i}-x_{w_{\ell-i}})\right)\\
=&\frac{2r-3}{r-1}W+\frac{r-2}{r-1}(-C)\\
&+(r-1)\left(\sum_{i=1}^{a+1}(x_{v_i}-x_{v_{\ell+1-i}})+\sum_{i=1}^{a}(x_{w_i}-x_{w_{\ell-i}})\right)\\
<&\frac{2r-3}{r-1}W+\sum_{i=1}^{\lfloor\frac{r-1}{2}\rfloor}(r-2i)(x_{v_{a+1+i}}-x_{v_{\ell-b+1-i}})\\
&+(r-1)\left(\sum_{i=1}^{a+1}(x_{v_i}-x_{v_{\ell+1-i}})+\sum_{i=1}^{a}(x_{w_i}-x_{w_{\ell-i}})\right).
\end{align*}
Now by Lemma \ref{ab}(i--iii) and the hypothesis that  $x_{v_{a+1}}-x_{v_{\ell-b+1}}\leq x_{v_{a+2}}-x_{v_{\ell-b}}$, we have
\begin{align*}
&\rho(T)(x_{v_{a+2}}-x_{v_{\ell-b}})\\
<& \frac{2r-3}{r-1}W+\sum_{i=1}^{\lfloor\frac{r-1}{2}\rfloor}(r-2i)(x_{v_{a+2}}-x_{v_{\ell-b}})\\
&+(r-1)\left(\sum_{i=1}^{a+1}(x_{v_{a+1}}-x_{v_{\ell-a}})+\sum_{i=1}^{a}(x_{v_{a+1}}-x_{v_{\ell-a}})\right)\\
<&  \frac{2r-3}{r-1}W+\sum_{i=1}^{\lfloor\frac{r-1}{2}\rfloor}(r-2i)(x_{v_{a+2}}-x_{v_{\ell-b}})\\
&+(r-1)\left(\sum_{i=1}^{a+1}(x_{v_{a+1}}-x_{v_{\ell-b+1}})+\sum_{i=1}^{a}(x_{v_{a+1}}-x_{v_{\ell-b+1}})\right)\\
\leq &  \frac{2r-3}{r-1}W+\left((2a+1)(r-1)+\sum_{i=1}^{\lfloor\frac{r-1}{2}\rfloor}(r-2i)\right)(x_{v_{a+2}}-x_{v_{\ell-b}}).
\end{align*}
Thus
\[
\frac{2r-3}{r-1}W>\left(\rho(T)-\left((2a+1)(r-1)+\sum_{i=1}^{\lfloor\frac{r-1}{2}\rfloor}(r-2i)\right)\right)(x_{v_{a+2}}-x_{v_{\ell-b}}).
\]
By Lemma \ref{ab}(ii), $x_{v_{a+2}}-x_{v_{\ell-b}}>0$. By Lemma \ref{sum},
$\rho(T)>(2a+1)(r-1)+\sum_{i=1}^{\lfloor\frac{r-1}{2}\rfloor}(r-2i)$. Thus, $W>0$, a contradiction. It follows that $x_{v_{a+1}}-x_{v_{\ell-b+1}}> x_{v_{a+2}}-x_{v_{\ell-b}}>0$.
By  similar arguments as above and from the distance  eigenequations of $T$ at $v_{a+1}$ and $v_{\ell-b+1}$,
 we have
\begin{align*}
&\rho(T)(x_{v_{a+1}}-x_{v_{\ell-b+1}})\\
=&r(\sigma_{T}(A)-\sigma_{T}(B))+C+(r-1)x_{w_{\ell-b}}\\
<&r(\sigma_{T}(A)-\sigma_{T}(B))+C+(r-1)\sum_{i=\ell-b+1}^{\ell-a-1}(x_{v_{i}}+x_{w_{i}})\\
=& r(\sigma_{T}(A)-\sigma_{T}(B))+C+(r-1)(\sigma_{T}(A)-\sigma_{T}(B))\\
&-(r-1)\left(\sum_{i=1}^{a+1}(x_{v_{\ell+1-i}}-x_{v_i})+\sum_{i=1}^{a}(x_{w_{\ell-i}}-x_{w_i})\right)\\
=&\frac{r}{r-1}W-\frac{r}{r-1}C+W\\
&+(r-1)\left(\sum_{i=1}^{a+1}(x_{v_i}-x_{v_{\ell+1-i}})+\sum_{i=1}^{a}(x_{w_i}-x_{w_{\ell-i}})\right)\\
=&\frac{2r-1}{r-1}W-\frac{r}{r-1}C\\
&+(r-1)\left(\sum_{i=1}^{a+1}(x_{v_i}-x_{v_{\ell+1-i}})+\sum_{i=1}^{a}(x_{w_i}-x_{w_{\ell-i}})\right)\\
=&\frac{2r-1}{r-1}W+\frac{r}{r-1}\sum_{i=1}^{\lfloor\frac{r-1}{2}\rfloor}(r-2i)(x_{v_{a+1+i}}-x_{v_{\ell-b+1-i}})\\
&+(r-1)\left(\sum_{i=1}^{a+1}(x_{v_i}-x_{v_{\ell+1-i}})+\sum_{i=1}^{a}(x_{w_i}-x_{w_{\ell-i}})\right)\\
<&  \frac{2r-1}{r-1}W+\frac{r}{r-1}\sum_{i=1}^{\lfloor\frac{r-1}{2}\rfloor}(r-2i)(x_{v_{a+2}}-x_{v_{\ell-b}})\\
&+(r-1)\left(\sum_{i=1}^{a+1}(x_{v_{a+1}}-x_{v_{\ell-a}})+\sum_{i=1}^{a}(x_{v_{a+1}}-x_{v_{\ell-a}})\right)\\
<&  \frac{2r-1}{r-1}W+\frac{r}{r-1}\sum_{i=1}^{\lfloor\frac{r-1}{2}\rfloor}(r-2i)(x_{v_{a+2}}-x_{v_{\ell-b}})\\
&+(r-1)\left(\sum_{i=1}^{a+1}(x_{v_{a+1}}-x_{v_{\ell-b+1}})+\sum_{i=1}^{a}(x_{v_{a+1}}-x_{v_{\ell-b+1}})\right)\\
\leq&  \frac{2r-1}{r-1}W+\left((2a+1)(r-1)+\frac{r}{r-1}\sum_{i=1}^{\lfloor\frac{r-1}{2}\rfloor}(r-2i)\right)(x_{v_{a+1}}-x_{v_{\ell-b+1}}).
\end{align*}
Thus
\[
\frac{2r-1}{r-1}W>\left(\rho(T)-\left((2a+1)(r-1)+\frac{r}{r-1}\sum_{i=1}^{\lfloor\frac{r-1}{2}\rfloor}(r-2i)\right)\right)(x_{v_{a+1}}-x_{v_{\ell-b+1}}).
\]
By Lemma  \ref{sum},
$\rho(T)>(2a+1)(r-1)+\frac{r}{r-1}\sum_{i=1}^{\lfloor\frac{r-1}{2}\rfloor}(r-2i)$. So $W>0$ and then $\rho(T')>\rho(T)$.
\end{proof}

\section{Proof of Theorem \ref{t0}}

Now we are ready to give a proof to Theorem \ref{t0}.


\begin{proof}

Let $T$ be a  hypertree of rank at most three on $n$ vertices with $k$ edges of size three that maximizes the distance spectral radius.

Let $x=x(T)$. Let  $\Delta$ be the maximum degree of $T$.

\noindent {\bf Claim 1.} $\Delta=2$.

Suppose that $\Delta\ge 3$. Then there is a vertex $u$ in $T$ with  $\mbox{deg}_T(u)=\Delta\geq3$. Thus $T$ consists of $\Delta$ maximal subhypertrees $T_1,\dots,T_{\Delta}$ containing a unique common vertex $u$ such that the degree of $u$ in any of $T_1,\dots,T_{\Delta}$ is one. Assume that $\sigma_T(T_1)\geq\sigma_T(T_2)$. Let $z$ be a pendant vertex in $T_2$.
Denote by $T'$ the hypergraph obtained from $T$ by moving the edge containing $u$ in $T_3$ from $u$ to $z$. Obviously, $T'$  is a hypertree of rank at most three on $n$ vertices with $k$ edges of size three. By Lemma \ref{3}, $\rho(T)<\rho(T')$, a  contradiction. Thus $\Delta=2$. This proves Claim 1.

Let $e$ be a pendant edge of $T$ at some vertex, say $v$.

\noindent {\bf Claim 2.} There is a vertex of degree one in any edge of size three of $T$.

Suppose that there is an edge of size three of $T$ in which every vertex has degree larger than one.
That is, there is an edge in $T$ of size  three, whose deletion yields three nontrivial components.
Choose such an edge $e_1=\{u_1,u_2,u_3\}$ so that $\min\{d_T(v, u_i): i=1,2,3\}$ is as large as possible.  Assume that $d_T(v,u_3)=\min\{d_T(v, u_i): i=1,2,3\}$.
Then $d_T(v,u_3)=d_T(v,u_i)-1$ for $i=1,2$.

For $i=1,2,3$, let $Q_i$ be the component of $T-e_1$  containing $u_i$.
Assume that  $\sigma_T(Q_1)\geq\sigma_T(Q_2)$.
By Claim 1,  $\Delta=2$. Let $e'$ be the unique edge in $Q_2$ containing $u_2$.

Suppose that $e'$ is of size two.
Let $T''$ be the hypertree obtained from $T$ by moving vertex  $u_3$  from $e_1$ to $e'$. Obviously, $T''$ is a hypertree of rank at most three on $n$ vertices with $k$ edges of size three. By  Lemma \ref{edge3}, $\rho(T)<\rho(T'')$, a  contradiction.
Thus $e'$ is of size three.

By the choice of $e_1$,  $T-e'$ has at most two nontrivial components, i.e., $T$ has  one pendant vertex, say $w$ in $e'$.
By Lemma \ref{entry}, we have $x_{u_{3}}+x_{u_2}-x_w>0$.
Let $T^*$ be the hypertree obtained from $T$ by moving the edge containing $u_3$ different $e_1$ from $u_3$ to $w$. Obviously, $T^*$  is a hypertree of rank at most three on $n$ vertices with $k$ edges of size three.
As we pass from   $T$ to $T^*$, the distance between a vertex of  $V(Q_3)\setminus \{u_3\}$  and a vertex of $V(Q_1)$  is increased by $1$,
the distance between a vertex of  $V(Q_3)\setminus \{u_3\}$  and a vertex of $V(Q_2)\setminus \{u_2,w\}$  is decreased by $1$, the distance between a vertex of  $V(Q_3)\setminus \{u_3\}$  and $u_3$ is increased by $2$, the distance between a vertex of  $V(Q_3)\setminus \{u_3\}$  and $w$ is decreased by $2$,  and the distance between any other vertex pair remains unchanged. Thus
\begin{align*}
\frac{1}{2}(\rho(T^*)-\rho(T))
&\geq\frac{1}{2}x^{\top}(D(\widetilde{T})-D(T))x\\
&=(\sigma_T(Q_3)-x_{u_3})(\sigma_T(Q_1)-(\sigma_T(Q_2)-x_{u_2}-x_w)+2x_{u_3}-2x_w)\\
&=(\sigma_T(Q_3)-x_{u_3})(\sigma_T(Q_1)-\sigma_T(Q_2)+2x_{u_3}+x_{u_2}-x_w)\\
&>0,
\end{align*}
so $\rho(T)<\rho(T^*)$, a  contradiction.
Thus,
there is a vertex of degree one in any edge of size three. This proves Claim 2.

By Claim 1, the maximum degree is two. By Claim 2, there is a vertex of degree one in any edge of size three of $T$. So
$T$ is a  loose path.
Now the result follows  if $k=\lfloor \frac{n-1}{2} \rfloor$. Suppose that $1\leq k<\lfloor \frac{n-1}{2} \rfloor$.

Suppose that there is an edge of size $3$, say  $e_2=\{w_1,w_2,w_3\}$ with $\mbox{deg}_T(w_1)=\mbox{deg}_T(w_2)=2$. For $i=1,2$, let $F_i$ be the component of $T-e_2$  containing $w_i$, respectively.

\noindent {\bf Claim 3.}  One of $F_1$ or $F_2$ has only edges of size three.

Otherwise, both $F_1$ and $F_2$ have at least one edge of size two.
Assume that  $\sigma_T(F_1)\geq\sigma_T(F_2)$.
Let $e^*$ be an edge in $F_2$ of size two and $T^{**}$ be the hypertree obtained from $T$ by moving vertex  $w_3$  from $e_2$ to $e^*$.
Obviously, $T^{**}$
is a hypertree of rank at most three on $n$ vertices with $k$ edges of size three.
By  Lemma \ref{edge3}, $\rho(T)<\rho(T^{**})$, a  contradiction.  So Claim 3 follows.

Recall  that $T$ is a  loose path. By Claim 3,  all vertices in the edges of size two induce an ordinary path, so
$T\cong T(n,a,b)$, where $0\leq a\leq b$ and $a+b=k$. Suppose that $b\geq a+2$, then by Lemma \ref{ab-2}, we have $\rho (T(n,a+1,b-1))>\rho (T(n,a,b))$, a contradiction. It follows that $T\cong T(n,\lfloor\frac{k}{2}\rfloor, \lceil\frac{k}{2}\rceil)$.
\end{proof}

\vspace{3mm}

\noindent {\bf Acknowledgement.}  This work was supported by the National Natural Science Foundation of China (No.~12071158) and the Youth Innovative Talent
Project of Guangdong Province of China (No. 2020KQNCX160).


\begin{thebibliography}{99}

\bibitem{AH-2} M. Aouchiche, P. Hansen, Distance spectra of graphs: A survey, Linear Algebra Appl. 458 (2014) 301--386.

\bibitem{BCM} A.T. Balaban, D. Ciubotariu, M. Medeleanu, Topological indices and real number
vertex invariants based on graph eigenvalues or eigenvectors, J. Chem. Inf.
Comput. Sci. 31 (1991) 517--523.

\bibitem{Ber} C. Berge,  Hypergraphs: Combinatorics of Finite Sets, North-Holland, Amsterdam, 1989.

\bibitem{SMS} S.S. Bose, M. Nath, S. Paul, On the distance spectral radius of cacti,  Linear Algebra Appl. 437 (2012) 2128--2141.

\bibitem{GP} R.L. Graham,  H.O. Pollack, On the addressing problem for
loop switching, Bell System Tech. J. 50 (1971) 2495--2519.

\bibitem{GM} I. Gutman, M. Medeleanu, On the structure-dependence of the largest eigenvalue
of the distance matrix of an alkane, Indian J. Chem. A 37 (1998) 569--573.





\bibitem{LZ1} H. Lin, B. Zhou,  Distance spectral radius of uniform hypergraphs, Linear Algebra Appl. 506 (2016) 564--578.





\bibitem{Mi} H. Minc, Nonnegative Matrices, John Wiley \&  Sons, New York, 1988.

\bibitem{NS} M. Nath, S. Paul, On the distance spectral radius of trees, Linear Multilinear Algebra 61 (2013) 847--855.

\bibitem{RP}
S.N. Ruzieh, D.L. Powers,
The distance spectrum of the path $P_n$ and the first distance eigenvector of connected graphs,
Linear Multilinear Algebra 28 (1990) 75--81.




\bibitem{Si} S. Sivasubramanian, $q$-analogs of distance matrices of $3$-hypertrees,
Linear Algebra Appl. 431 (2009) 1234--1248.

\bibitem{SI}
D. Stevanovi\'c,  A.  Ili\'c,
Distance spectral radius of trees with fixed maximum degree,
Electron. J. Linear Algebra 20 (2010) 168--179.

\bibitem{WZ1}
Y. Wang, B. Zhou, On distance spectral radius of graphs, Linear Algebra Appl. 438 (2013) 3490--3503.

\bibitem{WZ4}
Y. Wang, R. Xing,  B. Zhou, F. Dong, A note on distance spectral radius of treees, Spec. Matrices 5 (2017) 296--300.


\bibitem{WZ2}
 Y. Wang, B. Zhou, On distance  spectral radius of hypergraphs, Linear Multilinear Algebra. 66 (2018) 2232--2246.

\bibitem{WZ3}
Y. Wang, B. Zhou, Extremal properties of the distance spectral radius of hypergraphs,
Electron J. Linear Algebra 36 (2020) 411--419.


\bibitem{WIS}
S. Watanabe, K. Ishii, M. Sawa, A $Q$-analogue of the addressing problem of graphs by Graham and Pollak,  SIAM J. Discrete Math. 26 (2012) 527--536.

\bibitem{Yu}
G. Yu, R.  Wu, Y. Zhang, J. Shu,
Some graft transformations and its application on a distance spectrum,
Discrete Math. 311 (2011) 2117--2123.

\end{thebibliography}
\end{document}